\theoremstyle{plain}
\newtheorem{thm}{Theorem}[section]
\newtheorem{cor}[thm]{Corollary}
\newtheorem{lem}[thm]{Lemma}
\newtheorem{prop}[thm]{Proposition}
\newtheorem*{thm*}{Theorem}
\newtheorem{thmx}{Theorem}
\theoremstyle{definition}
\newtheorem{rem}[thm]{Remark}
\newtheorem{defn}[thm]{Definition}
\newcommand{\Db}{\text{D}^b}
\title{Twisted Derived Equivalences Between Abelian Varieties}
\author{Tyler Lane}
\begin{document}
\begin{abstract}
    We generalize a result of Popa-Schnell and show that the isogeny class of the Picard variety is twisted derived invariant.  Using this, we prove that any twisted Fourier-Mukai partner of an abelian variety is an abelian variety.  We then provide a necessary and sufficient isogeny-based condition for two abelian varieties to be twisted derived equivalent.
\end{abstract}
\maketitle

\section*{Introduction}
Here we study the twisted Fourier-Mukai partners of complex abelian varieties.  An important derived invariant of a smooth projective variety $X$ is the group $R^0_X:=\text{Aut}^0_X \times \text{Pic}^0_X$ (see \cite{Ros09}, \cite{Rou11}).  A natural replacement for $R^0_X$ in the twisted setting was introduced by Olsson in \cite{Ols25}.  It enables us to generalize several known results and arguments to the twisted case.  For example, arguing as in \cite{PS11}, we can prove

\begin{thmx}[{\Cref{t}}]\label{i: t PS}
    Let $X$ and $Y$ be smooth projective varieties over $\mathbb{C}$.  If $X$ and $Y$ are twisted derived equivalent, then $\text{Pic}^0_X$ and $\text{Pic}^0_Y$ are isogenous.
\end{thmx}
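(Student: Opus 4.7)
The plan is to imitate the strategy of Popa--Schnell \cite{PS11}, with Olsson's twisted analogue of $R^0_X$ from \cite{Ols25} taking the place of $R^0_X$ itself. Call this twisted analogue $\mathbf{R}^{\text{tw}}_X$. The essential starting input, which I would invoke directly from \cite{Ols25}, is that $\mathbf{R}^{\text{tw}}_X$ is a connected algebraic group and that any twisted Fourier--Mukai equivalence $\Phi : \Dbcch(\XX) \to \Dbcch(\YY)$ induces an isomorphism $\varphi : \mathbf{R}^{\text{tw}}_X \xrightarrow{\sim} \mathbf{R}^{\text{tw}}_Y$ of algebraic groups. This plays the role of Rouquier's theorem in the twisted setting.

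Next I would locate $\text{Pic}^0_X$ inside $\mathbf{R}^{\text{tw}}_X$ up to isogeny. Concretely, $\mathbf{R}^{\text{tw}}_X$ fits into an extension whose ``Picard factor'' is the identity component of a twisted Picard scheme of $\XX$; the point is that this twisted identity component is isogenous to $\text{Pic}^0_X$, since twisting by a Brauer class or a character only alters discrete and torsion invariants, not the abelian variety structure up to isogeny. Thus it is enough to show that $\varphi$ restricts to an isogeny between the Picard factors of the two groups.

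To carry out this last extraction, I would follow the Popa--Schnell step: consider the action of $\mathbf{R}^{\text{tw}}_X$ on (twisted) Hochschild homology. In \cite{PS11} the argument is that $\text{Aut}^0_X$ and $\text{Pic}^0_X$ act on the Hochschild-style invariants in genuinely different ways, and Rouquier's isomorphism is compatible with those actions up to conjugation by the Fourier--Mukai kernel; a dimension count combined with a kernel-finiteness argument then forces the Picard factors to correspond under the isomorphism up to isogeny. I would set up the analogous equivariance for twisted Hochschild homology on the gerbe $\XX$ and run the same numerical argument. The main obstacle I anticipate is precisely this equivariance: verifying that the twisted Hochschild action has the required rigidity, in particular that $\text{Aut}^0$ is not mixed into the Picard factor under $\varphi$ by new symmetries introduced by the gerbe structure. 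Once that compatibility is established, the dimension and kernel computation of \cite{PS11} transfers essentially verbatim, yielding the desired isogeny $\text{Pic}^0_X \sim \text{Pic}^0_Y$.
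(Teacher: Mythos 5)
Your starting point matches the paper: invoke Olsson's twisted Rouquier theorem from \cite{Ols25}, namely that a twisted Fourier--Mukai equivalence induces an isomorphism $\Psi:\text{Aut}^0_{\mathscr{X}}\xrightarrow{\sim}\text{Aut}^0_{\mathscr{Y}}$, and then try to compare the Picard parts. One small inaccuracy in your framing: you speak of a ``Picard factor'' that is merely isogenous to $\text{Pic}^0_X$, but in Olsson's sequence
$$0 \to \text{Pic}^0_X \xrightarrow{\eta_X} \text{Aut}^0_{\mathscr{X}} \xrightarrow{q_X} \text{Aut}^0_X \to 0$$
the subgroup is $\text{Pic}^0_X$ on the nose; there is no extra isogeny to absorb there. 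The genuine difficulty is that $\Psi$ does not respect this subgroup, so one must control the images $B=q_Y\Psi(\text{Pic}^0_X)\subseteq\text{Aut}^0_Y$ and $A=q_X\Psi^{-1}(\text{Pic}^0_Y)\subseteq\text{Aut}^0_X$.

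The larger problem is that you misidentify what the Popa--Schnell argument actually does, so the proposed reduction does not go through as written. The proof in \cite{PS11} --- and the paper's twisted adaptation --- does not run through an action on (twisted) Hochschild homology. Instead, one picks a point $(x,y)\in\text{Supp}(\mathscr{E})$, forms the orbit map $f=(f_1,f_2):A\times B\to X\times Y$ for the $A\times B$-action by automorphisms, and studies how tensoring the kernel $\mathscr{E}$ by $L\boxtimes M^{-1}$ for $(L,M)\in\text{Pic}^0(X)\times\text{Pic}^0(Y)$ interacts with $\Psi$ via \Cref{c: line bundles}. The dimension count then rests on showing that the stabilizer $\Sigma(\mathcal{H}^i(Lf^*\mathscr{E}))\subset\text{Pic}^0(A\times B)$ is finite. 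In the untwisted case this is accessible because those cohomology sheaves are (semi-homogeneous) vector bundles; in the twisted case this is exactly the new technical obstruction, and the paper proves it as \Cref{l: vector bundle}, by pulling back along an isogeny trivializing the gerbe, writing the twisted sheaf as $p^*H\otimes\mathscr{L}$ with $\mathscr{L}$ a $(-1,1)$-twisted line bundle, and invoking \cite[(3.4)]{dJO22} for the untwisted sheaf $H$. Your proposal replaces this concrete finiteness step with an unspecified ``twisted Hochschild equivariance'' and asserts the numerics ``transfer essentially verbatim''; that is precisely the part that does not transfer verbatim and requires the vector-bundle lemma. Without it you cannot conclude that $p_{12}|_{\text{im}(\varepsilon)}$ has finite kernel, and the inequality $\dim A\le\dim B$ (hence the isogeny $A\sim B$, hence $\text{Pic}^0_X\sim\text{Pic}^0_Y$) does not follow.
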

Arguing as in \cite{Ros09}, we can then prove

\begin{thmx}[{\Cref{t2}}]\label{i: c AV}
    Let $X$ and $Y$ be smooth projective varieties over $\mathbb{C}$, and suppose that $X$ and $Y$ are twisted derived equivalent.  If $X$ is an abelian variety, then so is $Y$.  Moreover, $Y$ is isogenous to $X$.
\end{thmx}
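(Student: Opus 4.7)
The plan is to combine Theorem~\ref{i: t PS} with the twisted analog of the Rosenberg invariant $R^0_X := \text{Aut}^0_X \times \Pic^0_X$ introduced by Olsson in \cite{Ols25} to force $Y$ to be homogeneous under an abelian variety, and then to read off both the abelian-variety structure on $Y$ and the isogeny to $X$.

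First I would record that the dimension of a smooth projective variety is a twisted derived invariant (e.g.\ via Hochschild homology or the Serre functor), so $\dim Y = \dim X$. Next, Theorem~\ref{i: t PS} says $\Pic^0_Y$ is isogenous to $\Pic^0_X$; since $X$ is an abelian variety, $\Pic^0_X = \hat X$, so $\Pic^0_Y$ is an abelian variety of dimension $\dim X$.

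The crucial input is the twisted invariance of $R^0$: Olsson's construction yields an isogeny between the twisted $R^0$ of $X$ and that of $Y$. For $X$ abelian, the twisted $R^0_X$ recovers $X \times \hat X$, so the twisted $R^0_Y$ is an abelian variety of dimension $2\dim X$. The natural projection from this group onto $\text{Aut}^0_Y$ is surjective, and a quotient of an abelian variety is again an abelian variety, so $\text{Aut}^0_Y$ itself is an abelian variety. A dimension count, using the established isogeny between $\Pic^0_Y$ and $\hat X$, then forces $\text{Aut}^0_Y$ to be isogenous to $X$; in particular $\dim \text{Aut}^0_Y = \dim X = \dim Y$.

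To finish I would argue as in \cite{Ros09}: the connected abelian variety $\text{Aut}^0_Y$ acts on $Y$ with $\dim \text{Aut}^0_Y = \dim Y$, so for any basepoint $y_0 \in Y$ the orbit map $\text{Aut}^0_Y \to Y$ is dominant, and by properness of $\text{Aut}^0_Y$ and irreducibility of $Y$ it is surjective. Its kernel is a zero-dimensional closed subgroup scheme, hence finite, so $Y$ is the quotient of the abelian variety $\text{Aut}^0_Y$ by a finite subgroup, hence itself an abelian variety. Composing with any isogeny $X \to \text{Aut}^0_Y$ then yields an isogeny $X \to Y$. I expect the main delicacy to lie in Step~3, namely in pinning down the twisted $R^0$ and confirming that it really is an isogeny invariant of twisted Fourier--Mukai equivalences which projects onto $\text{Aut}^0_Y$; once that input is in place, the remaining dimension-counting and homogeneity arguments are the standard ones of the untwisted case.
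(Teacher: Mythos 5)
Your proposal follows the paper's argument almost exactly: compute $\dim X=\dim Y$, use the twisted Popa--Schnell theorem (\Cref{t}) to get $\Pic^0_Y$ isogenous to $\hat X$, use Olsson's invariance of $\text{Aut}^0_{\mathscr{X}}$ to deduce $\text{Aut}^0_{\mathscr{Y}}$ is abelian of dimension $2n$, quotient by $\Pic^0_Y$ to see $\text{Aut}^0_Y$ is abelian of dimension $n$, and then show $Y$ is homogeneous under this abelian variety.

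Two points are stated backwards or imprecisely. First, the dominance of the orbit map $\text{Aut}^0_Y\to Y$ does \emph{not} follow merely from $\dim\text{Aut}^0_Y=\dim Y$: you must first know the stabilizer of a point is finite (otherwise the orbit could be lower-dimensional), and only then does the dimension count give surjectivity. Finiteness of the stabilizer is the nontrivial input here; it is exactly the content of \cite[Lemma 3.3]{Ros09} (a positive-dimensional connected subgroup of an abelian variety fixing a point would act trivially on the tangent space and hence on all of $Y$, contradicting faithfulness). As written your chain ``dominant $\Rightarrow$ stabilizer finite'' is circular. Second, the claim that $\text{Aut}^0_Y$ is isogenous to $X$ ``by a dimension count'' needs Poincar\'e complete reducibility (cancellation in the semisimple isogeny category), not just dimensions; alternatively, once $Y$ is known to be abelian it is cleaner to deduce $X\sim Y$ directly from $\hat X\sim\Pic^0_Y=\hat Y$ as the paper does. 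Neither issue changes the strategy, but both need to be repaired for the proof to close.
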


This result enables us to restrict our attention to twisted derived equivalences between abelian varieties.  Our main result is the following.
\begin{thmx}[{\Cref{t criterion}, \Cref{4: t main}}]\label{i t: main}
    Let $X$ and $Y$ be complex abelian varieties.  The following are equivalent:
    \begin{enumerate}
        \item $X$ and $Y$ are twisted derived equivalent
        \item There is an isogeny $f:\hat{X} \to Y$ whose kernel is isomorphic to $\oplus_{i=1}^r (\mathbb{Z}/m_i\mathbb{Z})^2$.
    \end{enumerate}
\end{thmx}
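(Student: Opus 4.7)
The plan is to prove each direction of the equivalence separately, and I expect the hard direction to be $(1) \Rightarrow (2)$.

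For $(2) \Rightarrow (1)$, given an isogeny $f \colon \hat X \to Y$ with kernel $K \cong \oplus_{i=1}^r (\bbz/m_i\bbz)^2$, I would first invoke the classical Fourier--Mukai equivalence $\Db(X) \simeq \Db(\hat X)$ to reduce to showing that $\hat X$ and $Y$ are twisted derived equivalent. The key algebraic observation is that a finite abelian group admits a non-degenerate alternating pairing valued in $\Gm$ precisely when it is of the shape $\oplus_i (\bbz/m_i\bbz)^2$; thus $K$ carries such a pairing and hence sits in a Heisenberg-type central extension $1 \to \Gm \to \tilde K \to K \to 1$. Choosing a maximal isotropic subgroup $L \subset K$ (so that $K/L \cong L^\vee$), one factors $f$ as $\hat X \to \hat X/L \to Y$ and recognizes both arrows as pieces of a twisted derived equivalence $\Db(\hat X, \alpha) \simeq \Db(Y, \beta)$, with twist classes $\alpha, \beta$ produced from the Heisenberg data. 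Composing with Fourier--Mukai then yields $(1)$.

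For $(1) \Rightarrow (2)$, I start from the conclusions of \Cref{i: t PS} and \Cref{i: c AV}: $Y$ is an abelian variety isogenous to $X$, and $\Pic^0_X$ is isogenous to $\Pic^0_Y$. The plan is to exploit the twisted version of $R^0$ due to Olsson together with the associated moduli-stack framework developed earlier in the paper, which is an invariant of the twisted derived category. Transporting these invariants across the equivalence produces a canonical homomorphism $\hat X \to Y$ which, by dimension and isogeny considerations, is an isogeny. The kernel then inherits an intrinsic pairing coming from the Mukai-type alternating form on the extended cohomology preserved by the twisted equivalence, and verifying that this pairing is non-degenerate and alternating forces the shape $\oplus_{i=1}^r (\bbz/m_i\bbz)^2$.

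The main obstacle is in $(1) \Rightarrow (2)$: producing the correct isogeny together with the alternating structure on its kernel. Concretely, one must show that the Brauer classes $\alpha$ on $X$ and $\beta$ on $Y$ entering the twisted equivalence naturally induce a pairing on the kernel, and that this pairing is both non-degenerate and alternating (rather than merely symmetric or degenerate). The reverse direction is largely constructive and adapts Heisenberg-group techniques familiar from the untwisted setting (as in Polishchuk's treatment), while carefully tracking the twist classes throughout.
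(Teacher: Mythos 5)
Your sketch of $(2)\Rightarrow(1)$ takes a genuinely different route from the paper. The paper does not factor $f$ through a maximal isotropic subgroup; instead it uses Brion's classification of irreducible homogeneous projective bundles: given the pair $(H,e)$ (with $H=\ker f$ and $e$ the non-degenerate alternating pairing supplied by \Cref{3: l finite groups}), \cite[Proposition 3.1]{Bri13} produces an irreducible homogeneous projective bundle $P\to X$ with $H_P=H$. The tautological twisted vector bundle $\mathscr{E}_P$ on the gerbe of trivializations $\mathscr{X}_P$ is then shown to be simple and semi-homogeneous with $\hat X$-stabilizer exactly $H$ (\Cref{l simple}, \Cref{l homogoneity}, \Cref{c orthog}). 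The induced map $Y\simeq \hat X/H\to \text{Spl}_{\mathscr{X}_P}$ gives a universal twisted sheaf $\mathscr{U}$ on $\mathscr{Y}\times\mathscr{X}_P$, and the orthogonality computation of \Cref{c orthog} verifies the C\u{a}ld\u{a}raru/Bridgeland criterion so that $\Phi_{\mathscr{U}}$ is an equivalence. Your Heisenberg/maximal-isotropic factorization is a reasonable alternative heuristic, but the step ``recognizes both arrows as pieces of a twisted derived equivalence'' is doing all the work and is left unsubstantiated; the paper's route via \cite{Bri13} produces the kernel explicitly and verifies the required vanishings concretely, so you would need to supply a comparable amount of detail to make your version work.

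For $(1)\Rightarrow(2)$ there is a genuine gap. The strategy you outline --- transport Olsson's $\text{Aut}^0_{\mathscr{X}}$-invariant across the equivalence to get a homomorphism $\hat X\to Y$, then read off an alternating pairing on the kernel --- is essentially what the paper carries out in Section 3, but that argument only succeeds when the Fourier--Mukai kernel $\mathscr{U}$ is locally free (so that the fiber sheaf $\mathscr{E}=\mathscr{U}_x$ is a simple semi-homogeneous twisted vector bundle, which is needed to invoke \Cref{3: c pbs and vbs}, \Cref{l homogoneity}, \Cref{l simple}, and \Cref{c orthog}). Without that hypothesis one gets only a map $\hat X\to Y$, not the pairing data on its kernel, and the appeal to a ``Mukai-type alternating form on extended cohomology'' is precisely the point that cannot be made precise by the methods of Sections 1--3. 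The paper instead proves the unconditional implication in Section 4 by invoking the external result \cite[Theorem 1.1]{LLT25}, which supplies a symplectic isomorphism $A_{(X,\alpha)}\simeq A_{(Y,\beta)}$, and then combining \Cref{4: p finiteness} (a descent of Polishchuk's \cite[Lemma 2.2.7(ii)]{Pol12} along the symplectic isogeny $\pi: X\times\hat X\to A_{(X,\alpha)}$) with \Cref{4 l: finite intersections} to produce two Lagrangians with finite intersection, whose quotient map is the desired isogeny. The author explicitly flags that \cite[Theorem 1.1]{LLT25} is an input they could not prove themselves; your proposal neither cites such a result nor sketches a replacement, so as written the hard direction does not close.
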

We say a few words about the proof.  Given an isogeny $f:\hat{X} \to Y$ as in \Cref{i t: main}, we can use \cite{Bri13} to produce an irreducible homogeneous projective bundle $P$ over $X$.  We show that $P$ is the projectivization of a simple semi-homogeneous twisted vector bundle bundle $E$ on $X$, and that $\text{ker}(f)$ is the stabilizer of $E$ for the $\text{Pic}^0_X$ action given by the tensor product.  It follows that the orbit map for the $\text{Pic}^0_X$ action on $E$ factors through a map from $Y$ to the moduli space of twisted sheaves on $X$.  To show that the corresponding family on $Y \times X$ is the kernel of a twisted Fourier-Mukai transform, we establish a twisted analogue of \cite[Lemma 4.8]{Or02} (see \Cref{c orthog}).

From here it is not difficult to show that the existence of an isogeny $f$ as in \Cref{i t: main} is equivalent to the existence of a twisted derived equivalence between $X$ and $Y$ with vector bundle kernel (see \Cref{3: c main}).  Removing the assumptions on the kernel is much more difficult, and it requires \cite[Theorem 1.1]{LLT25}.  \cite[Theorem 1.1]{LLT25} enables us to use Polishchuk's work on symplectic abelian varieties to produce the desired result; in particular, our result will follow from a modified version of \cite[Lemma 2.2.7(ii)]{Pol12}.

\subsection*{Relationship with other work} While preparing this manuscript, the results of \cite{LLT25} and \cite{Li25} were announced.  The work completed prior to \cite{LLT25} and \cite{Li25} makes up Sections 1-3.  In particular, our proofs of \Cref{i: t PS}, \Cref{i: c AV}, and \Cref{3: c main} are independent of \cite{LLT25} and \cite{Li25}.  In order to strengthen \Cref{3: c main} and complete the proof of \Cref{i t: main}, we must rely on \cite[Theorem 1.1]{LLT25} which we were not able to prove ourselves.

There is some overlap between our results and those of \cite{LLT25} and \cite{Li25}.  Using different methods, a generalization of \Cref{i: c AV} was proven in \cite[Theorem 1.9]{Li25}, and the $(2) \Rightarrow (1)$ implication of \Cref{i t: main} was proven in \cite[Corollary 7.5]{LLT25}.  Semi-homogeneous twisted vector bundles were also studied in \cite[Section 3]{Li25}.

Necessary and sufficient conditions for twisted derived equivalences between abelian varieties were also given in \cite[Theorem 1.1]{LLT25} and \cite[Proposition 6.8, Theorem 6.9]{Li25}.  The conditions given in \Cref{i t: main} are new, and we believe they are also the most concrete.  

\subsection*{Notation and Conventions} We always work over the field $k=\mathbb{C}$.  For a smooth projective variety $X$, we let $\text{Aut}_X$ (resp.\ $\text{Pic}_X$) denote the Picard scheme (resp. group scheme of automorphisms) of $X$, and we denote their neutral components by $\text{Aut}^0_X$ (resp. $\text{Pic}^0_X$).  We will typically write $\text{Pic}^0(X)$ (resp.\ $\text{Aut}^0(X))$ for the group of $k$-points of $\text{Pic}^0_X$ (resp.\ $\text{Aut}^0_X$).  We only consider $k$-points of schemes and thus write $t \in T$ instead of $t \in T(k)$.
We assume that the reader is familiar with twisted derived categories (see \cite{BS21}, \cite{C00}, \cite{Lan24} for an introduction).

\subsection*{Acknowledgements} I would like to thank Brendan Hassett, Zhiyuan Li, Ziwei Lu, Zhichao Tang, and Kenny Blakey for helpful comments on earlier drafts.
\section{Preliminaries}
In this section we recall the necessary preliminaries from \cite{Ols25}.  Let $X$ be a smooth projective variety, and let $\pi:\mathscr{X} \to X$ be a $\mathbb{G}_m$-gerbe.  Let $\alpha \in H^2(X,\mathbb{G}_m)$ the cohomology class of $\mathscr{X}$.

Let $\mathscr{A}ut_{\mathscr{X}}$ denote the fibered category that associates to a scheme
$T$ the groupoid of isomorphisms $\mathscr{X}_T \to \mathscr{X}_T$ that induce the identity map $\mathbb{G}_m \to \mathbb{G}_m$ on stabilizer groups.  It was shown in \cite[Theorem 1.1]{Ols25} that $\mathscr{A}ut_{\mathscr{X}}$ is a $\mathbb{G}_m$-gerbe over a group algebraic space $\text{Aut}_{\mathscr{X}}$ whose neutral component fits into an exact sequence
$$
  0 \to \text{Pic}^0_{X} \xrightarrow{\eta_X} \text{Aut}^0_{\mathscr{X}} \xrightarrow{q_X} \text{Aut}^0_{X}\to 0,$$
  where $q_X$ sends an automorphism $\sigma:\mathscr{X} \to \mathscr{X}$ to its rigidification.  Note that this implies that $\text{Aut}^0_\mathscr{X}$ is a group scheme.  The key result we need is that $\text{Aut}^0_{\mathscr{X}}$ is a twisted derived invariant:

  \begin{thm}[{\cite[Theorem 1.9]{Ols25}}]\label{t: Olsson}
  Let $\mathscr{Y} \to Y$ be a $\mathbb{G}_m$-gerbe over a smooth projective variety $Y$, and let $\beta \in H^2(Y,\mathbb{G}_m)$ denote the cohomology class of $\mathscr{Y}$.  Let $\Phi_{\mathscr{E}}:\text{D}^b(X,\alpha) \to \text{D}^b(Y,\beta)$ be a Fourier-Mukai equivalence.  Then $\Phi_{\mathscr{E}}$ induces an isomorphism $$\Psi:\text{Aut}^0_{\mathscr{X}} \xrightarrow{\sim} \text{Aut}^0_\mathscr{Y}.$$
\end{thm}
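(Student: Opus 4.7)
The plan is to adapt Rouquier's argument (see \cite{Rou11}) to the gerbe setting. Each $\sigma \in \text{Aut}^0_{\mathscr{X}}$ acts on $\text{D}^b(X,\alpha)$ by pullback along the corresponding automorphism of $\mathscr{X}$, and this pullback preserves $\chi$-weight complexes because by definition $\sigma$ restricts to the identity on the $\mathbb{G}_m$-stabilizers. The content of the theorem is then that conjugating such a pullback by $\Phi_{\mathscr{E}}$ again produces pullback by an element of $\text{Aut}^0_{\mathscr{Y}}$, in a way that is functorial in families.

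To make this functorial I would introduce the group algebraic space $G$ whose $T$-points are triples $(\sigma,\tau,\phi)$ with $\sigma \in \text{Aut}^0_{\mathscr{X}}(T)$, $\tau \in \text{Aut}^0_{\mathscr{Y}}(T)$, and $\phi:(\sigma \times \tau)^*\mathscr{E}_T \xrightarrow{\sim} \mathscr{E}_T$ an isomorphism in the twisted derived category on $(\mathscr{X} \times \mathscr{Y})_T$. Composition of triples makes $G$ a group scheme, and the two projections $p_1: G \to \text{Aut}^0_{\mathscr{X}}$ and $p_2: G \to \text{Aut}^0_{\mathscr{Y}}$ are group scheme homomorphisms. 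The desired isomorphism is $\Psi := p_2 \circ p_1^{-1}$ provided both $p_i$ are isomorphisms. Injectivity of each $p_i$ is the easy half: if $(\text{id},\tau,\phi) \in G$, then $\tau^*$ is naturally isomorphic to the identity autoequivalence of $\text{D}^b(Y,\beta)$, and $\text{Aut}^0_{\mathscr{Y}}$ acts faithfully on $\text{D}^b(Y,\beta)$ (using the Olsson sequence, the faithfulness of the tensor action of $\text{Pic}^0_Y$, and the fact that a non-identity automorphism of $Y$ moves skyscraper sheaves).

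The main obstacle is surjectivity of $p_1$: for $\sigma \in \text{Aut}^0_{\mathscr{X}}$, the family $(\sigma \times \text{id})^*\mathscr{E}$ is a connected family of Fourier-Mukai kernels through the identity kernel, so the corresponding autoequivalence of $\text{D}^b(Y,\beta)$ lies in the identity component of the group of autoequivalences, and we need to identify that component with $\text{Aut}^0_{\mathscr{Y}}$. In the untwisted case Rouquier identifies the neutral component with $\text{Aut}^0_Y \times \text{Pic}^0_Y$; in Olsson's setting these two factors should package into $\text{Aut}^0_{\mathscr{Y}}$ via the exact sequence
$$
0 \to \text{Pic}^0_Y \xrightarrow{\eta_Y} \text{Aut}^0_{\mathscr{Y}} \xrightarrow{q_Y} \text{Aut}^0_Y \to 0
$$
recalled above. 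Concretely, I would argue that any autoequivalence of $\text{D}^b(Y,\beta)$ in the identity component preserves the standard t-structure up to shift and so arises from tensoring with a line bundle and pulling back by an automorphism, then invoke Olsson's construction to combine this data into a single automorphism of the gerbe $\mathscr{Y}$. Applied to the family $(\sigma \times \text{id})^*\mathscr{E}$, this produces the required $\tau = \Psi(\sigma)$ and completes the argument.
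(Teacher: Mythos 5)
This statement is not proved in the paper at all---it is Olsson's Theorem 1.9 in \cite{Ols25}, cited and used as a black box. There is therefore no in-paper argument to compare against; what you have written is an attempt to reconstruct Olsson's proof, and it should be judged on those terms.

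On its own terms, the sketch has the right Rouquier-style shape, but it contains two genuine gaps. First, with the isomorphism $\phi$ included in the data, the maps $p_i:G\to \text{Aut}^0_{\mathscr{X}},\ \text{Aut}^0_{\mathscr{Y}}$ are \emph{not} injective: since the kernel $\mathscr{E}$ of an equivalence is simple, the triples $(\mathrm{id},\mathrm{id},\phi)$ form a copy of $\mathbb{G}_m$ inside $\ker p_1\cap\ker p_2$. Your argument shows that $\tau$ is forced to be the identity, but it does not (and cannot) pin down $\phi$. This is exactly why Olsson introduces $\mathscr{A}ut_{\mathscr{X}}$ as a $\mathbb{G}_m$-\emph{gerbe} over the group space $\text{Aut}_{\mathscr{X}}$: your $G$ is a model for the fiber product of these gerbes, and you must rigidify along the central $\mathbb{G}_m$ (or omit $\phi$ from the moduli data and handle representability separately) before either $p_i$ can be an isomorphism. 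Second, and more seriously, the surjectivity of $p_1$ is only gestured at. The claim that an autoequivalence of $\text{D}^b(Y,\beta)$ lying in the identity component ``preserves the standard t-structure up to shift and so arises from tensoring with a line bundle and pulling back by an automorphism'' is essentially a restatement of the conclusion rather than a step toward it; proving it requires a deformation-theoretic analysis of kernels near $\mathcal{O}_\Delta$ inside the derived category of the appropriately twisted product $\mathscr{Y}\times\mathscr{Y}$, which is the technical heart of Olsson's theorem and does not follow from the untwisted Rouquier statement by formal repackaging. Your outline would need to supply that analysis to be a proof.
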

Let $\text{Aut}^0(\mathscr{X})=\text{Aut}^0_\mathscr{X}(k)$, let $\sigma \in \text{Aut}^0(\mathscr{X})$, and suppose that $\Psi(\sigma)=\tau$.  Then by \cite[Remark 7.2]{Ols25}, we have that $\Phi_{\mathscr{E}} \circ \sigma^* \simeq \tau^* \circ \Phi_{\mathscr{E}}$.  Note that we have natural isomorphisms
$$\Phi_\mathscr{E} \circ \sigma^*=Rp_{2*}(\mathscr{E}\otimes^Lp_1^*\sigma^*(-)) \simeq Rp_{2*}(\mathscr{E}\otimes^L(\sigma \times \text{id})^*p_1^*(-))$$
$$\simeq Rp_{2*}((\sigma \times \text{id})^*(((\sigma^{-1} \times \text{id})^*\mathscr{E}) \otimes^L p_1^*(-))) \simeq  Rp_{2*}((\sigma^{-1} \times \text{id})_*(((\sigma^{-1} \times \text{id})^*\mathscr{E}) \otimes^L p_1^*(-)))$$
$$\simeq Rp_{2*}(((\sigma^{-1}\times \text{id})^*\mathscr{E})\otimes^Lp_1^*(-))$$
and
$$\tau^*\circ \Phi_{\mathscr{E}}=\tau^*Rp_{2*}(\mathscr{E}\otimes^Lp_1^*(-))\simeq Rp_{2*}((\text{id} \times \tau)^*(\mathscr{E} \otimes^L p_1^*(-))) \simeq Rp_2^*(((\text{id}\times \tau)^*\mathscr{E})\otimes^Lp_1^*(-)).$$
By uniqueness of Fourier-Mukai kernels (see \cite[Theorem 1.1]{CS07}), we obtain the following.

\begin{lem}\label{l: kernels}
Let $\Phi_{\mathscr{E}}:\text{D}^b(X,\alpha) \xrightarrow{\sim} \text{D}^b(Y,\beta)$ be a Fourier-Mukai equivalence.  Let $\Psi$ be as in \Cref{t: Olsson}, let $\sigma \in \text{Aut}^0({X})$, and let $\Psi(\sigma)=\tau$.  Then $(\sigma \times \tau)^*\mathscr{E} \simeq \mathscr{E}$.
\end{lem}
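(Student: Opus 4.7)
The plan is to read the lemma as a direct application of uniqueness of Fourier-Mukai kernels to the intertwining relation $\Phi_{\mathscr{E}} \circ \sigma^* \simeq \tau^* \circ \Phi_{\mathscr{E}}$ supplied by \cite[Remark 7.2]{Ols25}. The displayed manipulations preceding the statement already rewrite the left-hand side as the Fourier-Mukai transform with kernel $(\sigma^{-1} \times \text{id})^*\mathscr{E}$ and the right-hand side as the transform with kernel $(\text{id}\times \tau)^*\mathscr{E}$, so these two kernels induce naturally isomorphic functors.

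Since $\sigma^*$ and $\tau^*$ are autoequivalences, both of these Fourier-Mukai transforms are equivalences, and we are in position to apply (the twisted version of) \cite[Theorem 1.1]{CS07}. This gives an isomorphism of kernels
\[
(\sigma^{-1} \times \text{id})^*\mathscr{E} \simeq (\text{id}\times \tau)^*\mathscr{E}.
\]

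To finish, I would pull back both sides by $(\sigma \times \text{id})^*$. Using the identities $(\sigma \times \text{id})^* \circ (\sigma^{-1} \times \text{id})^* = \text{id}^*$ and $(\sigma \times \text{id})^*\circ (\text{id}\times \tau)^* = (\sigma \times \tau)^*$, this rearranges to the desired isomorphism $\mathscr{E} \simeq (\sigma \times \tau)^*\mathscr{E}$.

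The only subtlety worth checking carefully is that the uniqueness-of-kernels statement of \cite{CS07} remains valid in the twisted setting --- that is, for objects in $\text{D}^b(X \times Y, \alpha^{-1} \boxtimes \beta)$ whose associated integral transforms are isomorphic. This is by now standard in the twisted Fourier-Mukai literature, but it is the only nontrivial input beyond the formal manipulations already carried out.
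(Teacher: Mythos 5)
Your proposal is correct and follows essentially the same route as the paper: read off the kernels $(\sigma^{-1}\times\text{id})^*\mathscr{E}$ and $(\text{id}\times\tau)^*\mathscr{E}$ from the intertwining relation, invoke uniqueness of Fourier--Mukai kernels in the twisted setting (\cite[Theorem 1.1]{CS07}), and pull back by $(\sigma\times\text{id})^*$. The paper leaves that final pullback step implicit, so spelling it out as you do is a small but welcome clarification; your caveat about the applicability of \cite{CS07} in the twisted case is reasonable, but that is precisely what \cite[Theorem 1.1]{CS07} is stated for, so no additional work is required.
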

We are especially interested in automorphisms $\sigma \in \text{Aut}^0(X)$ which lie in the image of $\eta_X:\text{Pic}^0_X \to \text{Aut}^0_{\mathscr{X}}$.
\begin{prop}[{\cite[Proposition 6.1]{Ols25}}] \label{p: tensor}
Let $L \in \text{Pic}^0(X)$, and let $\eta_X(L):\mathscr{X} \to \mathscr{X}$ denote the corresponding automorphism.  Then there is a natural isomorphism between the functors $\eta(L)^*:\text{D}^b(X,\alpha) \to \text{D}^b(X,\alpha)$ and $(-)\otimes L:\text{D}^b(X,\alpha) \to \text{D}^b(X,\alpha)$.
\end{prop}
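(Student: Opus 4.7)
The plan is to unwind the meaning of $\eta_X(L)$ and translate both functors into the category of weight-$1$ sheaves on $\mathscr{X}$, where they become manifestly isomorphic.

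First, I would explain the construction of $\eta_X$ at the level of geometric points. Automorphisms of $\mathscr{X}$ that cover the identity on $X$ and act trivially on the $\mathbb{G}_m$-stabilizers form a sheaf on the small étale site of $X$; a standard gerbe-cocycle argument (or equivalently, the description of $\mathscr{X}$ via a $\mathbb{G}_m$-valued $2$-cocycle) identifies this sheaf with $\text{Pic}(X)$, and passing to connected components gives the map $\eta_X$. Under this identification, $\eta_X(L):\mathscr{X}\to\mathscr{X}$ is characterized by a canonical, $\mathscr{F}$-functorial isomorphism $\eta_X(L)^*\mathscr{F}\simeq \mathscr{F}\otimes_{\mathscr{O}_\mathscr{X}}\pi^*L$ for every quasi-coherent sheaf $\mathscr{F}$ on $\mathscr{X}$.

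Next, I would pass to the derived category by using the equivalence $\text{D}^b(X,\alpha)\simeq \text{D}^b_{\text{qc},\chi}(\mathscr{X})$ between the $\alpha$-twisted bounded derived category and the bounded derived category of weight-$1$ quasi-coherent sheaves on $\mathscr{X}$. Under this dictionary, $(-)\otimes L$ on $\text{D}^b(X,\alpha)$ corresponds to $(-)\otimes \pi^*L$ on the gerbe side, since $\pi^*L$ has weight $0$ and tensoring with it preserves the weight-$1$ subcategory. Likewise, $\eta_X(L)^*$ preserves the weight-$1$ part because $\eta_X(L)$ acts trivially on stabilizers. Combining this with the previous step yields the desired natural isomorphism, and since $\pi^*L$ is a line bundle, hence flat, no derived-functor subtleties arise in lifting to $\text{D}^b$.

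The main obstacle is the first step: producing the identification of gerbe-automorphisms with $\text{Pic}(X)$ along with the explicit formula $\eta_X(L)^*\mathscr{F}\simeq \mathscr{F}\otimes \pi^*L$. Both require careful cocycle bookkeeping, and one must check that the canonical isomorphism is not just natural in $\mathscr{F}$ but also compatible with composition of automorphisms, so that $\eta_X$ is a genuine group homomorphism whose image realizes the tensor-product action. Once this dictionary is set up correctly, the proposition reduces to a formal comparison of two descriptions of the same functor.
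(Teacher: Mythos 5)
The paper does not give its own proof of this statement: it is quoted as \cite[Proposition 6.1]{Ols25} and used as a black box, so there is no internal argument to compare yours against.

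Assessed on its own terms, your sketch has the shape of a definition followed by a tautology. You say that, under a gerbe-cocycle identification of $\text{ker}(q_X)$ with (the identity component of) $\text{Pic}_X$, the automorphism $\eta_X(L)$ is ``characterized by'' the isomorphism $\eta_X(L)^*\mathscr{F}\simeq\mathscr{F}\otimes\pi^*L$ --- at which point the proposition holds by fiat. But the substance of \cite[Proposition 6.1]{Ols25} is precisely that Olsson's $\eta_X$, which is constructed a priori and not by stipulating this pullback formula, does satisfy it. The cocycle computation you defer to ``careful bookkeeping'' is the entire proof: one must trivialize $\mathscr{X}$ and $L$ on a common cover, express $\eta_X(L)$ as the identity on patches glued by the $\mathbb{G}_m$-valued $1$-cocycle of $L$, and verify that pulling back a weight-$1$ sheaf along this automorphism multiplies its transition data by that same cocycle rather than by its inverse --- the sign matters, and nothing in your outline rules out landing on $\mathscr{F}\otimes\pi^*L^{-1}$. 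One must also check this identification is compatible with Olsson's actual definition of $\eta_X$, which you do not engage with. Your second paragraph, that the statement lifts to $\text{D}^b$ because $\pi^*L$ is invertible and both functors are exact, is correct but is not where any difficulty lies. So the route is plausible, but the one step that carries content is the one left unproved.
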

\begin{cor}\label{c: line bundles}
Let $\Phi_{\mathscr{E}}:\text{D}^b(X,\alpha) \xrightarrow{\sim} \text{D}^b(Y,\beta)$ be a Fourier-Mukai equivalence.  Let $\Psi$ be as in \Cref{t: Olsson}, let $L \in \text{Pic}^0(X)$, and suppose that there exists $M \in \text{Pic}^0(Y)$ such that $\Psi(\eta_X(L))=\eta_Y(M)$.  Then $\mathscr{E}\otimes(L\boxtimes M^{-1})\simeq \mathscr{E}.$
\end{cor}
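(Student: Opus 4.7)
The corollary should follow by paralleling the derivation of \Cref{l: kernels}, but substituting the tensor-product description of \Cref{p: tensor} for the pullback along the $\eta$-automorphisms.  Concretely, since $\Psi(\eta_X(L))=\eta_Y(M)$, \cite[Remark 7.2]{Ols25} supplies a natural isomorphism $\Phi_{\mathscr{E}}\circ\eta_X(L)^* \simeq \eta_Y(M)^*\circ\Phi_{\mathscr{E}}$, and applying \Cref{p: tensor} on each side converts this into a natural isomorphism of functors $\Phi_{\mathscr{E}}(L\otimes -) \simeq M\otimes \Phi_{\mathscr{E}}(-)$ from $\Db(X,\alpha)$ to $\Db(Y,\beta)$.

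Next I would unwind each functor as an explicit Fourier--Mukai transform and read off the kernel.  The left-hand side is $F\mapsto Rp_{2*}\bigl((\mathscr{E}\otimes p_1^*L)\otimes^L p_1^*F\bigr)$, so its kernel is $\mathscr{E}\otimes p_1^*L$.  For the right-hand side, one application of the projection formula gives $M\otimes Rp_{2*}(\mathscr{E}\otimes^L p_1^*F)\simeq Rp_{2*}\bigl((\mathscr{E}\otimes p_2^*M)\otimes^L p_1^*F\bigr)$, so its kernel is $\mathscr{E}\otimes p_2^*M$.  Uniqueness of Fourier--Mukai kernels \cite[Theorem 1.1]{CS07}---the same input used for \Cref{l: kernels}---then forces $\mathscr{E}\otimes p_1^*L\simeq \mathscr{E}\otimes p_2^*M$, which rearranges to $\mathscr{E}\otimes(L\boxtimes M^{-1})\simeq \mathscr{E}$.

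I do not expect any serious obstacle; each step is either already stated in the excerpt or is the projection formula on the gerbe $\mathscr{X}\times\mathscr{Y}$.  The main bookkeeping hazard is the sign: invoking \Cref{l: kernels} as a black box and expanding $(\eta_X(L)\times\eta_Y(M))^*\mathscr{E}$ on the product stack requires transporting \Cref{p: tensor} across a product of gerbes and can easily yield $L\boxtimes M$ in place of $L\boxtimes M^{-1}$.  Running the kernel comparison directly at the level of functors, as above, makes the projection-formula inversion manifest and avoids this pitfall.
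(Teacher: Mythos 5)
Your argument is correct and is essentially the paper's own proof: both pass through the natural isomorphism $\Phi_{\mathscr{E}}((-)\otimes L)\simeq \Phi_{\mathscr{E}}(-)\otimes M$ furnished by \Cref{p: tensor}, identify the two sides with $\Phi_{\mathscr{E}\otimes p_1^*L}$ and $\Phi_{\mathscr{E}\otimes p_2^*M}$ respectively, and conclude by uniqueness of Fourier--Mukai kernels. The added remark about the projection formula and the sign pitfall is just a more explicit spelling-out of the same steps.
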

\begin{proof}
  By assumption and by \Cref{p: tensor}, we have a natural isomorphism  $\Phi_{\mathscr{E}}((-)\otimes L) \simeq \Phi_{\mathscr{E}}(-)\otimes M.$  Using the natural isomoprhisms
  $$\Phi_\mathscr{E}((-) \otimes L) \simeq \Phi_{\mathscr{E}\otimes p_1^*L}, \quad \Phi_{\mathscr{E}}(-)\otimes M \simeq \Phi_{\mathscr{E}\otimes p_2^*M}$$
  together with uniqueness of kernels we obtain the desired result.
\end{proof}
\section{Invariance of the Picard Variety}
In this section we prove a twisted analogue of \cite[Theorem A(1)]{PS11}
\begin{thm}\label{t}
Let $X$ and $Y$ be smooth projective varieties, and let $$\Phi_{\mathscr{E}}:\text{D}^b(X,\alpha)\xrightarrow{\sim}\text{D}^b(Y,\beta)$$
be a Fourier-Mukai equivalence.  Then $\text{Pic}^0_X$ and $\text{Pic}^0_Y$ are isogenous. 
\end{thm}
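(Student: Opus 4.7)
The plan is to mimic the argument of Popa-Schnell \cite{PS11}, with \Cref{t: Olsson} playing the role of Rouquier's isomorphism.

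\Cref{t: Olsson} immediately gives an isomorphism of algebraic groups $\Psi: \text{Aut}^0_{\mathscr{X}} \xrightarrow{\sim} \text{Aut}^0_{\mathscr{Y}}$. The goal is to distill an isogeny between the Picard varieties from $\Psi$. Since $\Psi$ need not respect the extensions
$$0 \to \text{Pic}^0_X \to \text{Aut}^0_{\mathscr{X}} \to \text{Aut}^0_X \to 0, \quad 0 \to \text{Pic}^0_Y \to \text{Aut}^0_{\mathscr{Y}} \to \text{Aut}^0_Y \to 0,$$
we proceed via canonical group-theoretic constructions. The first step is to pass to the maximal abelian variety quotient on both sides using Chevalley's structure theorem, obtaining an induced isomorphism $A_X \cong A_Y$ of abelian varieties.

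To identify $A_X$, observe that $\text{Pic}^0_X$ is itself an abelian variety, so it has finite intersection with the maximal affine subgroup of $\text{Aut}^0_{\mathscr{X}}$ and thus injects (up to isogeny) into $A_X$. The quotient of $A_X$ by the image of $\text{Pic}^0_X$ is isogenous to the maximal abelian quotient of $\text{Aut}^0_X$; by the classical theorem of Nishi (see also Brion), this is isogenous to the subabelian variety $\text{Alb}^0(X) \hookrightarrow \text{Alb}(X) = \widehat{\text{Pic}^0_X}$, namely the image of the natural map $\text{Aut}^0_X \to \text{Alb}(X)$. Poincaré reducibility then gives $A_X \sim \text{Pic}^0_X \times \text{Alb}^0(X)$, and similarly for $Y$, yielding
$$\text{Pic}^0_X \times \text{Alb}^0(X) \sim \text{Pic}^0_Y \times \text{Alb}^0(Y).$$

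The main obstacle is cancelling the $\text{Alb}^0$ factors to extract the desired isogeny $\text{Pic}^0_X \sim \text{Pic}^0_Y$. A naive comparison of simple isogeny factor multiplicities is insufficient, since the $\text{Alb}^0$ factors can share simple components with the $\text{Pic}^0$ factors. Following Popa-Schnell, the resolution is to exploit the canonical embeddings $\text{Alb}^0 \hookrightarrow \widehat{\text{Pic}^0}$ on both sides, together with the derived-categorical characterization of $\text{Pic}^0_X \subseteq \text{Aut}^0_{\mathscr{X}}$ as the subgroup acting by tensor product (\Cref{p: tensor}), to separate the two factors. This final cancellation is the technical heart of the proof and carries over from the untwisted setting essentially without change.
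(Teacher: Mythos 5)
Your high-level strategy is the right one: use \Cref{t: Olsson} as a Rouquier-type isomorphism and mimic Popa--Schnell. You also correctly flag the two main structural issues --- $\Psi$ need not respect the extensions, and naive cancellation of abelian variety factors would fail. Your Chevalley reduction (passing to the maximal abelian quotients $A_X \simeq A_Y$ and decomposing $A_X \sim \text{Pic}^0_X \times \text{Alb}^0(X)$ by Poincar\'e reducibility) is a different packaging than the paper's, which instead forms $H_{\mathscr{X}} := \text{Aut}^0_{\mathscr{X}} \times_{\text{Aut}^0_X} A$ where $A$ is the image of $\text{Pic}^0_Y$ under $q_X \circ \Psi^{-1}$ (and symmetrically $B$), and shows directly that $A$ and $B$ are isogenous. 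Both reductions are plausible starting points.

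However, the assertion that ``this final cancellation \dots carries over from the untwisted setting essentially without change'' is a genuine gap, and it hides exactly where the new mathematics lives. Popa--Schnell's cancellation is not a formal separation of factors via the embedding $\text{Alb}^0 \hookrightarrow \widehat{\text{Pic}^0}$; it is the orbit-map argument: choose $(x,y)$ in the support of the kernel, form $f\colon A \times B \to X \times Y$, and analyze $\mathscr{F} = Lf^*\mathscr{E}$. The dimension count hinges on showing that the cohomology sheaves $\mathcal{H}^i(\mathscr{F})$ are vector bundles, so that their stabilizer under the $\text{Pic}^0$-action by tensoring is finite. In the untwisted case this is Mukai's homogeneity argument; in the twisted case $\mathscr{F}$ is a $(-1,1)$-twisted complex on a $\mathbb{G}_m^2$-gerbe over $A \times B$, and one cannot simply invoke Mukai. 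The paper supplies a new argument (\Cref{l: vector bundle}): trivialize the gerbe by an isogeny, untwist by an auxiliary twisted line bundle, and reduce to the untwisted homogeneity criterion \cite[(3.4)]{dJO22}. One also needs \Cref{l: kernels} and \Cref{c: line bundles} (built on \Cref{p: tensor} and uniqueness of twisted Fourier--Mukai kernels) to translate elements of $\text{Aut}^0(\mathscr{X})$ into automorphisms fixing $\mathscr{E}$. None of this is ``without change,'' and your proposal as written does not identify this step, so it does not constitute a proof. I'd also push back on the phrase ``the derived-categorical characterization \dots to separate the two factors'': \Cref{p: tensor} is used to show the tensor action on the kernel, not to split the isogeny $A_X \sim \text{Pic}^0_X \times \text{Alb}^0(X)$ into two canonical pieces; that splitting is only up to isogeny and is not unique.
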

\begin{proof}
    
Let $\mathscr{X} \to X$ (resp. $\mathscr{Y} \to Y$) be the $\mathbb{G}_m$-gerbe whose cohomology class is $\alpha \in H^2(X,\mathbb{G}_m)$ (resp. $\beta \in H^2(Y,\mathbb{G}_m$)).  Recall that we have a diagram
$$\begin{tikzcd}
{0} \arrow[r] & {\text{Pic}^0_X} \arrow[r,"\eta_X"] & {\text{Aut}^0_\mathscr{X}} \arrow[r,"q_X"] \arrow[d,"\Psi"] & {\text{Aut}^0_X} \arrow[r] & {0} \\
  {0} \arrow[r] & {\text{Pic}^0_Y} \arrow[r,"\eta_Y"] & {\text{Aut}^0_\mathscr{Y}} \arrow[r,"q_Y"] & {\text{Aut}^0_Y} \arrow[r] & {0}
\end{tikzcd}$$
where the horizontal sequences are exact, and the vertical arrow is the isomorphism $\Psi:\text{Aut}^0_\mathscr{X} \xrightarrow{\sim} \text{Aut}^0_\mathscr{Y}$ defined in \Cref{t: Olsson}.  Let $B \subseteq \text{Aut}^0_Y$ denote the image of $\text{Pic}^0_X$ under $\mathfrak{b}:=q_{Y} \circ \Psi$, and let $A \subseteq \text{Aut}^0_X$ denote the image of $\text{Pic}^0_Y$ under $\mathfrak{a}:=q_X \circ \Psi^{-1}$.  Note that $A$ and $B$ are abelian varieties.  If both $A$ and $B$ are trivial, then it is immediate that $\text{Pic}^0_{X} \simeq \text{Pic}^0_Y$, so we exclude this case from now on.
Let $H_\mathscr{X}=\text{Aut}^0_\mathscr{X}\times_{\text{Aut}^0_X} A$.  $H_{\mathscr{X}}$, being an extension of $A$ by an abelian variety, is thus an abelian variety.  It follows that we may split the short exact sequence
$$0 \xrightarrow{} \text{Pic}^0_X \xrightarrow{\eta_X} H_\mathscr{X} \xrightarrow{q_A} A \xrightarrow{} 0$$
in the isogeny category, i.e., that there exists a morphism $s: A \to H_{\mathscr{X}}$ such that $q_A \circ s = [n]$ for some nonzero integer $n$.  In particular, there exists an isogeny
$$\lambda:=(s,\eta_{X}):A \times \text{Pic}^0_X \to H_{\mathscr{X}}.$$
We define $H_{\mathscr{Y}}$ similarly and note that $\Psi$ induces an isomorphism $H_{\mathscr{X}} \simeq H_{\mathscr{Y}}$.

Let the abelian variety $A \times B$ act on $X \times Y$ by automorphisms.  Take a point $(x,y)$ contained in $\text{Supp}(\mathscr{E})$ and consider the orbit map
$$f=(f_1,f_2):A \times B \to X \times Y,\quad (a,b) \mapsto (a(x),b(y)).$$

By \cite[Lemma 2.2, Theorem 2.3]{PS11}, the induced map $A \times B \to \text{Alb}_X \times \text{Alb}_Y$ has finite kernel, so the dual map $f^*:\text{Pic}^0_X \times \text{Pic}^0_Y \to \hat{A} \times \hat{B}$ is surjective.  In particular, the map $f_1^*:\text{Pic}^0_X \to \hat{A}$ is surjective.

We define morphisms
$$\kappa:H_\mathscr{X} \to A\times B,\quad \phi \mapsto (q_X(\phi),q_Y(\Psi(\phi)))$$
and
$$\varepsilon: A \times \text{Pic}^0_X \to A \times B \times \hat{A},\quad (a, L) \mapsto (\kappa(\lambda(a,L)),f_1^*L)$$
which fit into a commuative diagram
$$\begin{tikzcd}
{A \times \text{Pic}^0_X} \arrow[r, "\varepsilon"] \arrow[d,swap, "\lambda"] & {A \times B \times \hat{A}} \arrow[d, "p_{12}"] \\
{H_\mathscr{X}} \arrow[r,swap, "\kappa"]           & {A \times B}          
\end{tikzcd}$$
By definition of $A$ and $B$, $\kappa$ is surjective, so that
$$\text{dim}(A)+\text{dim}(\text{Pic}^0_X) = \text{dim}(A)+\text{dim}(B)+\text{dim}(\text{ker}(p_{12}\circ \varepsilon)).$$
The projection onto $\text{Pic}^0_X$ induces a morphism $\text{ker}(\varepsilon) \to \text{ker}(f_1^*)$ whose kernel is finite; indeed, if $(a,0) \in \text{ker}(\varepsilon)$, then $a$ is contained in $\text{ker}(q_X \circ s)$, which is finite.  This fact, along with surjectivity of $f_1^*$ implies that
$$\text{dim}(\text{ker}(\varepsilon)) \leq \text{dim}(\text{ker}(f_1^*))=\text{dim}(\text{Pic}^0_X)-\text{dim}(A).$$
If the restriction of $p_{12}$ to $\text{im}(\varepsilon)$ has finite kernel, then we must have that $\text{dim}(\text{ker}(\varepsilon))=\text{dim}(\text{ker}(p_{12}\circ \varepsilon))$ so that $\text{dim}(A) \leq \text{dim}(B)$ by the above inequalities.

Let $\mathscr{Z} \to A \times B$ denote the pullback of the gerbe $\mathscr{X} \times \mathscr{Y} \to X \times Y$ along $f$, let $\gamma \in H^2(A \times B,\mathbb{G}_m)$ denote the cohomology class of $\mathscr{Z}$, and let $\mathscr{F}:=Lf^*\mathscr{E} \in \text{D}^b(A \times B ,\gamma)$.

Let $(0,0,P) \in \text{im}(\varepsilon)$, and let $(a,L) \in A \times \text{Pic}^0_X$ be such that $\varepsilon(a,L)=(0,0,P)$.  Then we must have that $s(a)\in \text{Pic}^0_X(X)$.  Let $s(a)=M \in \text{Pic}^0(X)$, and note that $M$ is contained in the finite set $s(A) \cap \text{Pic}^0_X$.  By assumption, we must also have that $\Psi(\lambda(a,L))=\Psi(\eta_{X}(L \otimes M))$ is contained in $ \text{ker}(q_Y)=\text{Pic}^0_Y$, so we can write $\Psi(\eta_{X}(L \otimes M)) =\eta_Y(N)$ for some line bundle $N \in \text{Pic}^0(Y)$.  By \Cref{c: line bundles}, this implies that
$$\mathscr{E} \otimes ((L \otimes M) \boxtimes N^{-1} )\simeq \mathscr{E}$$
so that
$$ \mathcal{H}^i(\mathscr{F})\otimes ((P \otimes f_1^*M)\boxtimes f_2^* N^{-1}) \simeq \mathcal{H}^i(\mathscr{F}).$$
Now, let
$$\Sigma(\mathcal{H}^i(\mathscr{F}))=\{\mathscr{L} \in \text{Pic}^0(A \times B) : \mathcal{H}^i(\mathscr{F})\otimes \mathscr{L} \simeq \mathcal{H}^i(\mathscr{F})\}.$$
We claim that $\Sigma(\mathcal{H}^i(\mathscr{F}))$ is finite.  To prove this, it suffices to show that $\mathcal{H}^i(\mathscr{F})$ is a vector bundle.  We we will do so using the following lemma.

\begin{lem}\label{l: vector bundle}
  Let $X$ be an abelian variety, let $\pi:\mathscr{X} \to X$ be a $\mathbb{G}_m^2$-gerbe, and let $\mathscr{G}$ be a $(-1,1)$-twisted coherent sheaf on $\mathscr{X}$.  If for all $x \in X$ there exists an automorphism $\sigma:\mathscr{X} \to \mathscr{X}$ such that 
  \begin{enumerate}
  \item $\sigma$ induces the identity on stabilizers,
  \item the rigidification of $\sigma$ is the map $t_x:X \to X$,
  \item $\sigma^*\mathscr{G} \simeq \mathscr{F}$,
  \end{enumerate}
  then $\mathscr{G}$ is a vector bundle.
\end{lem}
\begin{proof}
  Let $f:Y \to X$ be an isogeny trivializing $\mathscr{X}$, and let $\mathscr{Y}$ denote the fiber product
  $$
\begin{tikzcd}
{\mathscr{Y}} \arrow[r,, "g"] \arrow[d, swap, "p"] & {\mathscr{X}} \arrow[d, "\pi"] \\
{Y} \arrow[r, swap, "f"]           & {X}          
\end{tikzcd}.$$
It suffices to show that $\mathscr{H}:=g^*\mathscr{G}$ is a vector bundle.  Fix $y \in Y$.  There is a lift $\tau:\mathscr{Y} \to \mathscr{Y}$ of $t_y$ such that $\tau$ and $\mathscr{H}$ satisfy conditions (1)-(3) above: if we let $x=\pi(y)$, we can take $\sigma$ as in the lemma and let $\tau=f^*\sigma$.  Since $\mathscr{Y}$ is trivial, there exists a coherent sheaf $H$ on $Y$ such that $\mathscr{H} \simeq p^*H \otimes \mathscr{L}$ for some $(-1,1)$-twisted invertible sheaf $\mathscr{L}$.  Since $\tau$ induces the identity on stabilizers, we have that $\tau^*\mathscr{L} \simeq \mathscr{L} \otimes p^*L$ for some line bundle $L$ on $Y$.  Since $\tau^*\mathscr{H} \simeq \mathscr{H}$, we have that $p^*(t_y^*H \otimes L)\otimes \mathscr{L} \simeq p^*H \otimes \mathscr{L}$, which implies that $t_y^*H \otimes L \simeq H$.  By \cite[(3.4)]{dJO22}, $H$ is a vector bundle, and thus $\mathscr{H}$ is too.
\end{proof}
Let $(a,b) \in A \times B$.  Recall that $\mathscr{Z}=f^*(\mathscr{X} \times \mathscr{Y})$, that $\mathscr{F}=Lf^*\mathscr{E}$, and that there is a Cartesian square
$$\begin{tikzcd}
{A \times B} \arrow[r, "t_{(a,b)}"] \arrow[d,swap, "f"] & {A \times B} \arrow[d, "f"] \\
{X \times Y} \arrow[r, swap, "{(a,b)}"]           & {X \times Y}          
\end{tikzcd} $$

To produce $\sigma$ as in \Cref{l: vector bundle}, it suffices to find an automorphism $\tau:\mathscr{X}\times \mathscr{Y}\to \mathscr{X} \times \mathscr{Y}$ such that
      \begin{enumerate}
      \item $\tau$ induces the identity on stabilizers,
      \item the rigidification of $\tau$ is $(a,b)$,
      \item $\tau^*\mathscr{E} \simeq \mathscr{E}$.
      \end{enumerate}
Then we can take $\sigma=f^*\tau$.
Recall that the morphism $\kappa:H_{\mathscr{X}} \to A \times B$ is surjective.  It follows from the definition of $\kappa$ that there exists a pair of isomorphisms $(\phi,\psi) \in \text{Aut}^0({\mathscr{X}})\times \text{Aut}^0(\mathscr{Y})$ such that $\phi$ and $\psi$ induce the identity of on inertia, the rigidificiation of $\phi$ (resp. $\psi$) is $a$ (resp. $b$), and such that $\Psi(\phi)=\psi$.  It follows from \Cref{l: kernels} that $(\phi,\psi)^*\mathscr{E} \simeq \mathscr{E}$.  Thus the morphism $\tau=(\phi,\psi):\mathscr{X} \times \mathscr{Y} \to \mathscr{X} \times \mathscr{Y}$ has the desired properties.

We have shown that $\mathcal{H}^i(\mathscr{F})$ is a vector bundle so that $\Sigma(\mathcal{H}^i(\mathscr{F}))$ is a finite set.  In particular, we have that $P \otimes  f_1^* M$ is contained in the finite set  $\tilde{\Sigma}=p_{\hat{A}}(\Sigma(\mathcal{H}^i(\mathscr{F})))$, where $p_{\hat{A}}:\hat{A} \times \hat{B} \to \hat{A}$ is the projection.  Hence,
$$P \in \bigcup_{s(A) \cap \text{Pic}^0_X} \tilde{\Sigma}\otimes f_1^*M^{-1}.$$
Since $s(A) \cap \text{Pic}^0_X$ is finite, we see that there are only finitely many choices for $P$.  It follows that the restriction of $p_{12}$ to  $\text{im}(\varepsilon)$ has finite kernel, so that $\text{dim}(A) \leq \text{dim}(B)$.  Applying the same argument to $\Phi^{-1}_{\mathscr{E}}$ we obtain the reverse inequality, so that $\text{dim}(A)=\text{dim}(B)$.

 Next we show that $A$ and $B$ are isogenous.  We have shown thus far that the $\text{im}(\varepsilon) \subseteq A \times B \times \hat{A}$ is an abelian subvariety is isogenous to $A \times B$ via (the restriction of) the projection map $p_{12}:\text{im}(\varepsilon) \to A \times B$.  We claim that the projection map $p_{13}:A \times B \times \hat{A} \to A \times \hat{A}$ restricts to an isogeny $\text{im}(\varepsilon) \to A \times \hat{A}$.  For dimension reasons, it suffices to show that this map is surjective.  Let $(a,P) \in A \times \hat{A}$.  Since $f_1^*$ is surjective, we can find $L \in \text{Pic}^0(X)$ such that $f_1^*L \simeq P$.  We can also find $a' \in A$ such that $q_X(s(a'))=a$.  We have that $p_{13}(\varepsilon(a',L))=(a,P)$, as desired.  It follows that $A \times \hat{A}$ and $A \times B$ are isogenous.  Applying the same argument to $\Phi_{\mathscr{E}}^{-1}$, we see that $B \times \hat{B}$ and $A \times B$ are isogenous.  It follows that $A \times \hat{A}$ and $B \times \hat{B}$ are isogenous so that $A$ and $B$ are isogenous by the argument of \cite[Lemma 2.1]{Hon18}.

Now consider the exact sequences
$$0\to \text{ker}(\mathfrak{b}) \to \text{Pic}^0_X \to B \to 0$$
$$0 \to \text{ker}(\mathfrak{a}) \to \text{Pic}^0_Y \to A \to 0$$
Since $\Psi$ induces an isomorphism $\text{ker}(\mathfrak{b}) \simeq \text{ker}(\mathfrak{a})$, and since $A$ and $B$ are isogenous, we must have that $\text{Pic}^0_X$ and $\text{Pic}^0_Y$ are isogenous.
\end{proof}
 Arguing as in \cite{Ros09}, we can derive the following corollary of \Cref{t}.
\begin{thm}\label{t2}
    Let $X$ and $Y$ be smooth projective varieties that are twisted derived equivalent.  If $Y$ is an abelian variety, then so is $X$.  Moreover, $X$ is isogenous to $Y$.
\end{thm}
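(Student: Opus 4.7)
The plan is to mimic \cite{Ros09} in the twisted setting: first compute $\dim \text{Aut}^0_X$ using \Cref{t: Olsson} and \Cref{t}, then combine this dimension count with derived invariance of the canonical bundle to conclude that $X$ is an abelian variety. Since $Y$ is abelian, $\text{Aut}^0_Y \simeq Y$ and $\text{Pic}^0_Y \simeq \hat Y$, so the short exact sequence for $\mathscr{Y}$ gives $\dim \text{Aut}^0_{\mathscr{Y}} = 2\dim Y$. Combining \Cref{t: Olsson} (which yields $\text{Aut}^0_{\mathscr{X}} \simeq \text{Aut}^0_{\mathscr{Y}}$), \Cref{t} (which yields $\dim \text{Pic}^0_X = \dim Y$), and the preservation of dimension under twisted Fourier--Mukai equivalences (e.g., via matching shifts of Serre functors), the short exact sequence for $\mathscr{X}$ then yields $\dim \text{Aut}^0_X = \dim X$.

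Next I would show $X$ is an abelian variety. Since $\omega_Y = \mathcal{O}_Y$, matching Serre functors forces $(-)\otimes \omega_X$ to be isomorphic to the identity autoequivalence of $\text{D}^b(X,\alpha)$; comparing determinants of an $\alpha$-twisted sheaf of rank $r$ with $r\alpha = 0$ shows $\omega_X^r \simeq \mathcal{O}_X$, so $K_X$ is torsion and in particular numerically trivial. By the Beauville--Bogomolov decomposition, some finite étale cover $\tilde X \to X$ splits as $\tilde X \simeq A \times \prod_i Y_i \times \prod_j Z_j$ with $A$ abelian and each $Y_i$ (respectively $Z_j$) a simply connected strict Calabi--Yau (respectively hyperkähler). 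Each factor other than $A$ has trivial $\text{Aut}^0$, so $\dim \text{Aut}^0_{\tilde X} = \dim A$; since vector fields pull back, $\dim A \geq \dim \text{Aut}^0_X = \dim X = \dim \tilde X$, forcing $\tilde X = A$. Writing $X = A/\Gamma$ with $\Gamma$ finite acting freely by affine automorphisms, and noting that the linear part $\Gamma_0 \subseteq \text{Aut}_{\text{grp}}(A)$ fixes $\text{Lie}(\text{Aut}^0_X) \subseteq \text{Lie}(A)$, which has dimension $\dim A$ by the dimension count above, we conclude $\Gamma_0 = 1$; hence $\Gamma$ acts by translations and $X$ is an abelian variety.

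For the isogeny claim, since $X$ is abelian, $\text{Pic}^0_X \simeq \hat X$; by \Cref{t}, $\hat X$ is isogenous to $\hat Y$, and dualizing gives $X$ isogenous to $Y$. I expect the main obstacle to be the middle paragraph, specifically the invocation of Beauville--Bogomolov and the subsequent argument that the deck transformations of the resulting abelian cover act by translations; the dimension count and the final isogeny argument are essentially bookkeeping given \Cref{t: Olsson} and \Cref{t}.
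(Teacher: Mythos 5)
Your proposal is correct, but it takes a genuinely different route from the paper's. The paper exploits the full structure of $\text{Aut}^0_{\mathscr{X}}$: since $Y$ is abelian, $\text{Aut}^0_{\mathscr{Y}}$ is an extension of $Y$ by $\hat Y$ and hence an abelian variety of dimension $2n$; \Cref{t: Olsson} transports this so $\text{Aut}^0_{\mathscr{X}}$ is abelian, and since $\text{Pic}^0_X$ is an $n$-dimensional abelian subvariety (by \Cref{t}), the quotient $\text{Aut}^0_X$ is an \emph{abelian variety} of dimension $n$, not merely a group of the right dimension. The paper then cites \cite[Lemma~3.3]{Ros09} for finiteness of stabilizers, concludes the orbit map $\text{Aut}^0_X \to X$ is surjective, and invokes \cite[Lemma~1.13]{Lan24} to deduce that $X$ is abelian. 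You only extract the dimension $\dim \text{Aut}^0_X = \dim X$ from the exact sequence and compensate with additional geometric input: the Serre-functor/determinant argument to show $\omega_X$ is torsion, then Beauville--Bogomolov to split a finite \'etale cover $\tilde X$, a vector-field count to kill the Calabi--Yau and hyperk\"ahler factors, and a final argument that the linear part of the deck group fixes $\text{Lie}(A)$ pointwise and hence is trivial. All of these steps are sound, and the deck-group step you flag as the main risk in fact goes through exactly as you sketch, since $H^0(X,T_X)=\text{Lie}(A)^{\Gamma_0}$ and a group automorphism of a complex abelian variety is determined by its differential at the origin. The trade-off: the paper's argument is shorter and leans on the fact that $\text{Aut}^0_{\mathscr{X}}$ is an abelian variety, citing two external lemmas; yours avoids those citations at the cost of invoking Beauville--Bogomolov and doing more work by hand. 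The isogeny conclusion is handled identically in both.
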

\begin{proof}
Let $\mathscr{X} \to X$ and $\mathscr{Y} \to Y$ be $\mathbb{G}_m$-gerbes for which there is an equivalence $\text{D}^b(X,[\mathscr{X}])\simeq \text{D}^b(Y,[\mathscr{Y}])$.  Then, arguing exactly as in \cite[Lemma 2.5]{Lan24}, we obtain that $\text{dim}(X)=\text{dim}(Y)=n$.  By \Cref{t: Olsson}, we have that $\text{Aut}^0_{\mathscr{X}}\simeq \text{Aut}^0_{\mathscr{Y}}$, so that $\text{Aut}^0_{\mathscr{X}}$ is an abelian variety of dimension $2n$.  By \Cref{t}, $\text{Pic}^0_X$ is an $n$-dimensional abelian variety.  Using the exact sequence
$$0 \to \text{Pic}^0_X \to \text{Aut}^0_{\mathscr{X}} \to \text{Aut}^0_X \to 0,$$
we see that $\text{Aut}^0_X$ is also an $n$-dimensional abelian variety.  Consider the action of $\text{Aut}^0_X$ on $X$ by automorphisms, and choose $x \in X$.  By \cite[Lemma 3.3]{Ros09}, the stabilizer of $x$ for this action is finite.  It follows that the orbit map $\tau_x:\text{Aut}^0_X \to X$ is surjective.  By \cite[Lemm 1.13]{Lan24} $X$ is an abelian variety.  It follows immediately from \Cref{t} that $X$ and $Y$ are isogenous.
\end{proof}
\section{A Twisted Derived Equivalence Criterion}
Let $X$ be an abelian variety.  In this section we study semi-homogeneous twisted vector bundles on ($\mathbb{G}_m$-gerbes over) $X$ and homogeneous projective bundles over $X$.  We begin by recalling the dictionary between twisted vector bundles and projective bundles.  After introducing the relevant definitions, we show that this restricts to a correspondence between semi-homogeneous twisted vector bundles and homogeneous projective bundles. The latter were thoroughly studied in \cite{Bri13}.  We use the results of \cite{Bri13} to establish the properties of semi-homogeneous twisted vector bundles needed to study twisted derived equivalences between abelian varieties.

\subsection{Perspectives on the Brauer Group}  Let $X$ be a smooth projective variety.  Let $\pi:\mathscr{X} \to X$ be a $\mathbb{G}_m$-gerbe, and let $\mathscr{E}$ be a $\mathscr{X}$-twisted vector bundle of rank $n$.  We will show that $\mathbb{P}(\mathscr{E})$ is a $\mathbb{G}_m$-gerbe over a $\mathbb{P}^{n-1}$-bundle $P \to X$ whose associated Azumaya algebra is $\pi_*\mathcal{E}nd(\mathscr{E})$.  Moreover, we show that any $\mathbb{P}^{n-1}$-bundle over $X$ arises in this way.  This is surely well-known, but it does not appear to have been written down in precisely this way.

From the exact sequence
$$0 \to \mathbb{G}_m \to GL_n \to PGL_n \to 0,$$
one obtains a long exact sequence on étale cohomology
$$\dots \to H^1(X,\mathbb{G}_m) \to H^1(X,GL_n) \to H^1(X,PGL_n) \xrightarrow{\delta} H^2(X,\mathbb{G}_m).$$
The group $H^1(X,PGL_n)$ can be interpreted as either the set of isomorphism classes of degree $n$ Azumaya algebras over $X$ or the set of isomorphism classes of $\mathbb{P}^{n-1}$-bundles over $X$.  The map $\delta$ sends an object parameterized by $H^1(X,PGL_n)$ to (the isomorphism class of) its \textit{gerbe of trivializations}.

Let $P \to X$ be a $\mathbb{P}^{n-1}$-bundle; we let $\mathscr{X}_P$ denote its gerbe of trivializations.  Following \cite[Section 6]{BS21}, $\pi:\mathscr{X}_P \to X$ is the stack whose objects over a scheme $T \to X$ are pairs $(\mathcal{V},v)$, where $\mathcal{V}$ is a locally free sheaf on $X_T$ and $v:P_T \to \mathbb{P}(\mathcal{V})$ is an isomorphism over $T$.  The banding by $\mathbb{G}_m$ given by the action of $\mathbb{G}_m(T)$ on $\mathcal{V}$ by scalar multiplication.  The gerbe $\mathscr{X}_P$ is endowed with a tautological rank $n$ vector bundle $\mathscr{E}_P$ obtained by forgetting the isomorphism $v$, and it is straightforward to see that $\mathscr{E}_P$ is a $\mathscr{X}$-twisted vector bundle making the following square $2$-Cartesian.
$$
\begin{tikzcd}
{\mathbb{P}(\mathscr{E}_P)} \arrow[r] \arrow[d] & {P} \arrow[d, "p"] \\
{\mathscr{X}} \arrow[r, swap, "\pi"]           & {X}          
\end{tikzcd}.$$
In particular, we see that $\mathbb{P}(\mathscr{E}_P)$ is a $\mathbb{G}_m$-gerbe over $P$.

Let $\mathcal{A}$ be a degree $n$ Azumaya algebra on $X$.  Following \cite[Section 12]{Ols16} the gerbe of trivializations $\mathscr{X}_{\mathcal{A}}$ is the stack whose objects over a morphism $f:T \to X$ are pairs $(\mathcal{V},\lambda)$, where $\mathcal{V}$ is a vector bundle on $T$ and $\lambda:\mathcal{E}nd(\mathcal{V}) \to f^*\mathcal{A}$ is an isomorphism of $\mathcal{O}_T$-algebras.  The banding by $\mathbb{G}_m$ is given by the action of $\mathbb{G}_m(T)$ on $\mathcal{V}$ by scalar multiplication.  $\mathscr{X}_{\mathcal{A}}$ is equipped with a tautological rank $n$ $\mathscr{X}_\mathcal{A}$-twisted vector bundle, which we denote by $\mathscr{E}_\mathcal{A}$.  We will need the following result from \cite{Ols16}.

\begin{prop}[{\cite[Proposition 12.3.11]{Ols16}}]\label{3: p Olsson}
    Let $\pi:\mathscr{X} \to X$ be a $\mathbb{G}_m$-gerbe, and let $\mathscr{E}$ be a $\mathscr{X}$-twisted vector bundle of rank $n$.  Then $\mathcal{A}:=\pi_*\mathcal{E}nd(\mathscr{E})$ is a degree $n$ Azumaya algebra on $X$, and there exists an isomorphism of $\mathbb{G}_m$-gerbes $\phi:\mathscr{X} \to \mathscr{X}_\mathcal{A}$ such that $\mathscr{E} \simeq \phi^*\mathscr{E}_{\mathcal{A}}$.
\end{prop}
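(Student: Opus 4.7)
The argument naturally splits into verifying the Azumaya property étale-locally on $X$ and then constructing $\phi$ from the functor of points of $\mathscr{X}_\mathcal{A}$.

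First, I would note that $\mathcal{E}nd(\mathscr{E})\simeq \mathscr{E}^\vee\otimes \mathscr{E}$ has $\mathbb{G}_m$-weight zero, hence descends along the gerbe $\pi$ to a coherent $\mathcal{O}_X$-algebra $\mathcal{A}$, and the standard equivalence between weight-zero sheaves on $\mathscr{X}$ and sheaves on $X$ supplies a canonical isomorphism $\pi^*\mathcal{A}\simeq \mathcal{E}nd(\mathscr{E})$. To check the Azumaya condition, pick an étale cover $U\to X$ trivializing $\mathscr{X}$, i.e., an isomorphism $\mathscr{X}_U\simeq B\mathbb{G}_m\times U$. Under this trivialization, weight-one coherent sheaves on $\mathscr{X}_U$ correspond to ordinary coherent sheaves on $U$; let $V$ denote the rank $n$ vector bundle corresponding to $\mathscr{E}|_{\mathscr{X}_U}$. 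Then $\mathcal{A}|_U\simeq \mathcal{E}nd(V)$, which is Azumaya of degree $n$ (trivially so after further refining $U$ to split $V$).

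Next, I would construct $\phi:\mathscr{X}\to \mathscr{X}_\mathcal{A}$ pointwise: for any scheme $T$ and any object $\xi\in\mathscr{X}(T)$ lying over $f:T\to X$, set $\mathcal{V}_\xi := \xi^*\mathscr{E}$, which is an honest rank $n$ vector bundle on $T$ since the section $\xi$ converts weight-one sheaves into untwisted ones. The chain of natural isomorphisms
$$\mathcal{E}nd(\mathcal{V}_\xi) \simeq \xi^*\mathcal{E}nd(\mathscr{E}) \simeq \xi^*\pi^*\mathcal{A} \simeq f^*\mathcal{A}$$
provides the required trivialization $\lambda_\xi$. Sending $\xi\mapsto (\mathcal{V}_\xi,\lambda_\xi)$ defines $\phi$, and the scalar action of $u\in\mathbb{G}_m(T)$ on $\xi$ corresponds to scalar multiplication by $u$ on $\mathcal{V}_\xi$, so $\phi$ respects the $\mathbb{G}_m$-bandings. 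Any such morphism of $\mathbb{G}_m$-gerbes over $X$ is automatically an isomorphism. The identification $\phi^*\mathscr{E}_\mathcal{A}\simeq \mathscr{E}$ is then tautological: evaluating $\mathscr{E}_\mathcal{A}$ at $(\mathcal{V}_\xi,\lambda_\xi)$ returns $\mathcal{V}_\xi=\xi^*\mathscr{E}$.

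The main technical obstacle is purely bookkeeping: one must verify that the assignment $\xi\mapsto (\mathcal{V}_\xi,\lambda_\xi)$ is genuinely $2$-functorial, which reduces to naturality of the isomorphism $\xi^*\pi^*\mathcal{A}\simeq f^*\mathcal{A}$ under base change in $T$. This follows from flatness of the gerbe $\pi$ together with the projection formula, and all of the real content of the statement concentrates in the local description of weight-one sheaves on a trivialized $\mathbb{G}_m$-gerbe.
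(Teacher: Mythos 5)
This proposition is cited from \cite[Proposition 12.3.11]{Ols16} and the paper does not supply its own proof, so there is no in-paper argument to compare against; your proposal should be assessed on its own merits.

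Your argument is essentially the standard one and looks correct. The weight-zero descent of $\mathcal{E}nd(\mathscr{E})$, the \'etale-local identification of $\mathcal{A}$ with $\mathcal{E}nd(V)$ after trivializing the gerbe, and the construction of $\phi$ via $\xi \mapsto (\xi^*\mathscr{E},\,\lambda_\xi)$ with $\lambda_\xi$ built from $\pi^*\mathcal{A}\simeq\mathcal{E}nd(\mathscr{E})$ are all sound, and the observation that a banded morphism of $\mathbb{G}_m$-gerbes over $X$ is automatically an equivalence correctly closes the argument. The identification $\phi^*\mathscr{E}_\mathcal{A}\simeq\mathscr{E}$ is indeed tautological from the description of $\mathscr{E}_\mathcal{A}$.

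Two small imprecisions worth flagging. First, when you say weight-one sheaves on $\mathscr{X}_U\simeq B\mathbb{G}_m\times U$ ``correspond to ordinary coherent sheaves on $U$,'' this correspondence depends on a choice (e.g.\ a section of $\mathscr{X}_U\to U$ or, equivalently, a weight-one line bundle on $\mathscr{X}_U$); the statement is still true, but the correspondence is not canonical, and that is precisely why the global object is twisted. Second, attributing the $2$-functoriality to ``flatness of $\pi$ together with the projection formula'' is off-target: what is really needed is that the natural isomorphisms $\xi^*\mathcal{E}nd(\mathscr{E})\simeq\mathcal{E}nd(\xi^*\mathscr{E})$ and $\xi^*\pi^*\mathcal{A}\simeq f^*\mathcal{A}$ are compatible with $2$-morphisms in $\mathscr{X}(T)$ and with base change $T'\to T$, which is a cocycle/coherence check rather than a flatness statement. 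Neither point affects the validity of the argument, but a reader following the cited reference would expect the coherence verification to be the locus of actual work.
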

Our main result is the following corollary.
\begin{cor}\label{3: c pbs and vbs}
    Let $\pi:\mathscr{X} \to X$ be a $\mathbb{G}_m$-gerbe, and let $\mathscr{E}$ be a $\mathscr{X}$-twisted vector bundle of rank $n$.  Then there exists a $\mathbb{P}^{n-1}$-bundle $P \to X$ such that
    \begin{enumerate}
        \item there exists an isomorphism of $\mathbb{G}_m$-gerbes $\phi:\mathscr{X} \to \mathscr{X}_{P}$ such that $\mathscr{E} \simeq \phi^*\mathscr{E}_P$,
        \item there is a map $\mathbb{P}(\mathscr{E}) \to P$ making the following square $2$-Cartesian
        $$
\begin{tikzcd}
{\mathbb{P}(\mathscr{E})} \arrow[r] \arrow[d] & {P} \arrow[d, "p"] \\
{\mathscr{X}} \arrow[r, swap, "\pi"]           & {X}          
\end{tikzcd},$$
        \item the Azumaya algebra corresponding to $P$ is $\pi_*\mathcal{E}nd(\mathscr{E})$.
    \end{enumerate}
\end{cor}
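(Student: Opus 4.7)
The plan is to combine \Cref{3: p Olsson} with the discussion above on gerbes of trivializations. First, apply \Cref{3: p Olsson} to $\mathscr{E}$ to produce the Azumaya algebra $\mathcal{A} := \pi_*\mathcal{E}nd(\mathscr{E})$ of degree $n$, along with an isomorphism of $\mathbb{G}_m$-gerbes $\phi_0:\mathscr{X} \to \mathscr{X}_\mathcal{A}$ satisfying $\mathscr{E} \simeq \phi_0^*\mathscr{E}_\mathcal{A}$. Let $P \to X$ be the $\mathbb{P}^{n-1}$-bundle corresponding to $\mathcal{A}$ under the bijection between degree $n$ Azumaya algebras and $\mathbb{P}^{n-1}$-bundles on $X$ coming from the two interpretations of $H^1(X,PGL_n)$. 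Part (3) then holds by construction.

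Next, I would construct a canonical isomorphism of $\mathbb{G}_m$-gerbes $\psi:\mathscr{X}_\mathcal{A} \to \mathscr{X}_P$ compatible with tautological twisted bundles, i.e., satisfying $\mathscr{E}_\mathcal{A} \simeq \psi^*\mathscr{E}_P$. Given $f:T \to X$, a trivialization $(\mathcal{V},\lambda:\mathcal{E}nd(\mathcal{V})\xrightarrow{\sim} f^*\mathcal{A})$ induces an isomorphism $\mathbb{P}(\mathcal{V}) \xrightarrow{\sim} P_T$, producing a trivialization $(\mathcal{V},v)$ of $P$; this rule defines $\psi$ on $T$-points, and the inverse is given by the standard reconstruction of $\mathcal{A}$ from $P$ as $\mathcal{E}nd$ of a local trivialization. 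Since the tautological twisted bundles are in both cases obtained by forgetting the trivializing datum from $\mathcal{V}$, the compatibility $\mathscr{E}_\mathcal{A} \simeq \psi^*\mathscr{E}_P$ is automatic. Setting $\phi := \psi \circ \phi_0$ then yields (1). For (2), the $2$-Cartesian square displayed in the Perspectives section exhibits $\mathbb{P}(\mathscr{E}_P)$ as the pullback of $P \to X$ along $\pi_P:\mathscr{X}_P \to X$; pulling this square back along $\phi$ and using $\mathbb{P}(\mathscr{E}) \simeq \phi^*\mathbb{P}(\mathscr{E}_P)$ gives the required diagram.

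The main obstacle is the construction of $\psi$ together with the verification that it is compatible with tautological bundles. This is essentially bookkeeping with the coboundary map $\delta:H^1(X,PGL_n) \to H^2(X,\mathbb{G}_m)$, but care is needed because one is matching two distinct presentations (via Azumaya algebras and via projective bundles) of the same class. All other assertions, once $\psi$ is in hand, reduce immediately to \Cref{3: p Olsson} and the discussion preceding it.
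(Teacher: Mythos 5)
Your proposal is correct and follows essentially the same route as the paper: both apply \Cref{3: p Olsson} to produce $\mathcal{A}=\pi_*\mathcal{E}nd(\mathscr{E})$ with the gerbe isomorphism $\phi_0$, then construct the comparison isomorphism $\psi:\mathscr{X}_\mathcal{A}\to\mathscr{X}_P$ by sending a trivialization $(\mathcal{V},\lambda)$ to the pair $(\mathcal{V},\lambda^*)$ with $\lambda^*$ the induced isomorphism $\mathbb{P}(\mathcal{V})\xrightarrow{\sim}P_T$, and both observe that compatibility with the tautological twisted bundles is automatic because each is obtained by forgetting the trivializing datum.
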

\begin{proof}
    Let $\mathscr{X}$ and $\mathscr{E}$ be as in the statement, let $\mathcal{A}=\pi_*\mathcal{E}nd(\mathscr{E})$, and let $P$ be the $\mathbb{P}^{n-1}$-bundle associated to $\mathcal{A}$.  By \Cref{3: p Olsson} and the above discussion of $\mathscr{X}_P$, it suffices to show that there is an isomorphism $\psi:\mathscr{X}_\mathcal{A} \xrightarrow{\sim}\mathscr{X}_P$ such that $\psi^*\mathscr{E}_P \simeq \mathscr{E}_\mathcal{A}$.  We construct the desired morphism of stacks as follows.  Given a morphism $f:T \to X$, and a $T$-point $(\mathcal{V}, \lambda)$ of $\mathscr{X}_\mathcal{A}$, we obtain an isomorphism of $PGL_n$-torsors $\text{Isom}(M_n(\mathcal{O}_T),\mathcal{E}nd(\mathcal{V})) \to \text{Isom}(M_n(\mathcal{O}_T),f^*\mathcal{A}))$.  We send $(\mathcal{V},\lambda)$ to the pair $(\mathcal{V},\lambda^*)$, where $\lambda^*$ is the isomorphism of associated $\mathbb{P}^{n-1}$-bundles $\mathbb{P}(\mathcal{V}) \xrightarrow{\sim} P_T$.  One can verify that this indeed defines a morphism of stacks.  It is clear from the construction that $\phi^*\mathscr{E}_\mathcal{A} \simeq \mathcal{E}_P$.
\end{proof}
\subsection{Semi-Homogeneous Bundles}

Let $\pi:\mathscr{X} \to X$ be a $\mathbb{G}_m$-gerbe over $X$.  Recall that there is a short exact sequence
$$0 \to \hat{X} \to \text{Aut}^0_{\mathscr{X}} \xrightarrow{q_X} X \to 0,$$
where $q_X$ sends an automorphism $\sigma:\mathscr{X} \to \mathscr{X}$ to its rigidification.

\begin{defn}
    Let $\mathscr{E}$ be an $\mathscr{X}$-twisted vector bundle.  We say that $\mathscr{E}$ is \textit{semi-homogeneous} if for any $x \in X$, there exists a lift $\tau_x:\mathscr{X} \to \mathscr{X}$ of $t_x$ such that $\tau_x^*\mathscr{E} \simeq \mathscr{E} \otimes \mathscr{L}$ for some line bundle $\mathscr{L}$ on $\mathscr{X}$.
\end{defn}
\begin{rem}
    By a lift of $t_x$, we mean an automorphism $\tau_x \in \text{Aut}^0(\mathscr{X})$ such that $q_X(\tau_x)=t_x$.
\end{rem}

We will primarily be interested in studying those $\mathscr{X}$-twisted vector bundles $\mathscr{E}$ which are $\textit{simple}$, meaning that the natural map $k \to \text{End}(\mathscr{E})$ is an isomorphism.  Let $\mathcal{S}pl_{\mathscr{X}}$ denote the stack of simple $\mathscr{X}$-twisted coherent sheaves.  It is a $\mathbb{G}_m$-gerbe over an algebraic space which we denote by $\text{Spl}_{\mathscr{X}}$.  There is an action of $\hat{X}$ on $\text{Spl}_{\mathscr{X}}$ by the tensor product.  For a simple $\mathscr{X}$-twisted vector bundle $\mathscr{E}$, let $\Sigma(\mathscr{E}) \subset \mathscr{X}$ denote the stabilizer of $\mathscr{E}$ for the $\hat{X}$ action.  $\Sigma(\mathscr{E})$ is a finite closed subgroup scheme of $\hat{X}$, and the orbit map $o_{\mathscr{E}}:\hat{X} \to \text{Spl}_{\mathscr{X}}$ factors through a map $\widetilde{o_\mathscr{E}}:\hat{X}/\Sigma(\mathscr{E}) \to \text{Spl}_{\mathscr{X}}$.

As we noted above, any twisted vector bundle arises from a projective bundle.  We are interested in determining what kinds of projective bundles give rise to simple semi-homogeneous vector bundles.

\begin{defn}
    A projective bundle $P \to X$ is \textit{homogeneous} if $t_x^*P \simeq P$ for all $x \in X$.
\end{defn}
\begin{lem}\label{l homogoneity}
    A projective bundle $P \to X$ is homogeneous if and only if the $\mathscr{X}_P$-twisted vector bundle $\mathscr{E}_P$ is semi-homogeneous.
\end{lem}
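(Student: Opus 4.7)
The plan is to use \Cref{3: c pbs and vbs} to recast the question in terms of projective bundles, then descend along the $\mathbb{G}_m$-gerbe $\pi:\mathscr{X}_P \to X$. The key input is the 2-Cartesian square, which identifies $\mathbb{P}(\mathscr{E}_P) \simeq \pi^*P$ as $\mathbb{P}^{n-1}$-bundles over $\mathscr{X}_P$. With this identification in hand, both implications of the lemma follow from the same chain of equivalences.

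Fix $x \in X$. A lift $\tau_x \in \text{Aut}^0(\mathscr{X}_P)$ of $t_x$ always exists because the exact sequence
$$0 \to \hat{X} \to \text{Aut}^0_{\mathscr{X}_P} \xrightarrow{q_X} X \to 0$$
remains surjective on $\mathbb{C}$-points ($\hat{X}$ is a smooth connected group scheme over $\mathbb{C}$). Because $\pi \circ \tau_x = t_x \circ \pi$, projectivizing the tautological bundle gives
$$\mathbb{P}(\tau_x^*\mathscr{E}_P) \;=\; \tau_x^*\pi^*P \;=\; \pi^*(t_x^*P)$$
as $\mathbb{P}^{n-1}$-bundles on $\mathscr{X}_P$. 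From the nonabelian cohomology sequence associated to $1 \to \mathbb{G}_m \to GL_n \to PGL_n \to 1$, two rank-$n$ $\mathscr{X}_P$-twisted vector bundles with isomorphic projectivizations differ by tensoring with a line bundle. Hence the semi-homogeneity condition $\tau_x^*\mathscr{E}_P \simeq \mathscr{E}_P \otimes \mathscr{L}$ is equivalent to $\pi^*(t_x^*P) \simeq \pi^*P$ over $\mathscr{X}_P$. By descent along the $\mathbb{G}_m$-gerbe $\pi$, this is in turn equivalent to $t_x^*P \simeq P$ over $X$, proving both directions of the lemma simultaneously.

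The main obstacle is the descent step: justifying that $\pi^*(t_x^*P) \simeq \pi^*P$ over $\mathscr{X}_P$ implies $t_x^*P \simeq P$ over $X$. The cleanest way I see to handle this is through Azumaya algebras via \Cref{3: p Olsson}. Let $\mathcal{A}_P$ and $\mathcal{A}_{t_x^*P}$ denote the Azumaya algebras corresponding to $P$ and $t_x^*P$ on $X$; their pullbacks along $\pi$ become isomorphic on $\mathscr{X}_P$. Since coherent sheaves on $X$ embed fully faithfully into the category of sheaves on the $\mathbb{G}_m$-gerbe $\mathscr{X}_P$ (as those with trivial $\mathbb{G}_m$-action), this isomorphism descends to $X$, yielding $\mathcal{A}_P \simeq \mathcal{A}_{t_x^*P}$ and hence $P \simeq t_x^*P$ as projective bundles.
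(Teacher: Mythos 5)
Your proof is correct and follows essentially the same approach as the paper: identify $\mathbb{P}(\mathscr{E}_P)$ with $\pi^*P$ via the 2-Cartesian square, use $\pi\circ\tau_x = t_x\circ\pi$ to pass to $\pi^*(t_x^*P)$, invoke the $GL_n/PGL_n$ cohomology sequence to pass between twisted bundles and their projectivizations, and descend from $\mathscr{X}_P$ to $X$. The one small variation is the final descent step, which the paper handles directly by rigidifying $\mathbb{P}(\mathscr{E}_P) \simeq \tau_x^*\mathbb{P}(\mathscr{E}_P)$ along $\mathbb{G}_m$, whereas you route through the corresponding Azumaya algebras and the monoidal full faithfulness of $\mathrm{QCoh}(X) \hookrightarrow \mathrm{QCoh}(\mathscr{X}_P)$; both justifications are valid.
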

\begin{proof}
    Let $x \in X(k)$, and let $\tau_x:\mathscr{X}_P \to \mathscr{X}_P$ be a lift of $t_x$.  If $\mathscr{E}_P$ is semi-homogeneous, then $\mathbb{P}(\mathscr{E}_P) \simeq \mathbb{P}(\tau_x^*\mathscr{E}_P) \simeq \tau_x^*\mathbb{P}(\mathscr{E}_P)$.  Rigidifying along $\mathbb{G}_m$, we get $P \simeq t_x^*P$.  If $P$ is homogeneous, then $P \simeq t_x^*P$ so that
    $$\mathbb{P}(\mathscr{E}_P) \simeq \pi^*P \simeq \pi^*t_x^*P \simeq \tau_x^*\pi^*P \simeq \tau_x^*\mathbb{P}(\mathscr{E}_P)\simeq \mathbb{P}(\tau_x^*\mathscr{E}_P).$$
    It follows that $\mathscr{E}_P \simeq \tau_x^*\mathscr{E}_P \otimes \mathscr{L}$ for some line bundle $\mathscr{L}$ on $\mathscr{X}_P$.
\end{proof}
We will restrict our attention to those homogeneous projective bundles which are $\textit{irreducible}$.  Before giving the definition, recall that with any projective bundle $P \to X$ we may associate the $\textit{adjoint bundle}$ $\text{ad}(P)$.  If we let $\mathcal{A}_P$ denote the Azumaya algebra corresponding to $P$, then is the quotient of $\mathcal{A}_P$ by $\mathcal{O}_X$ by \cite[Section 2]{Bri13}.
\begin{defn}\cite[Proposition 3.5]{Bri13}  A homogeneous projective bundle $P$ is called $\textit{irreducible}$ if $H^0(X,\text{ad}(P))=0$.
\end{defn}
Irreducible homogeneous projective bundles have been classified in \cite{Bri13}.

\begin{prop}\label{p classification}\cite[Proposition 3.1]{Bri13}
\begin{enumerate}
    \item The irreducible homogeneous $\mathbb{P}^{n-1}$-bundles are classified by pairs $(H,e)$, where $H \subset X[n]$ is a subgroup of order $n^2$ and $e:H \times H \to \mathbb{G}_m$ is a non-degenerate alternating pairing.
    \item  For the bundle $P$ corresponding to $(H,e)$, the Azumaya algebra $\mathcal{A}_P$ admits a grading by the group $H$, namely
    $$\mathcal{A}_P \simeq \bigoplus_{L \in H} L.$$
\end{enumerate}
\end{prop}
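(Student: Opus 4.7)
The plan is to translate the statement into a question about homogeneous Azumaya algebras and then to extract the combinatorial data using Mukai's structure theorem. By \Cref{3: c pbs and vbs} and \Cref{l homogoneity}, an irreducible homogeneous $\mathbb{P}^{n-1}$-bundle $P$ corresponds to a degree $n$ Azumaya algebra $\mathcal{A}_P = \pi_*\mathcal{E}nd(\mathscr{E}_P)$ that is translation-invariant ($t_x^*\mathcal{A}_P \simeq \mathcal{A}_P$ for all $x \in X$); the condition $H^0(X, \text{ad}(P)) = 0$ becomes $H^0(X, \mathcal{A}_P) = k$, i.e.\ $\mathscr{E}_P$ is simple.

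I would next apply Mukai's theorem that any translation-invariant coherent sheaf on $X$ admits a filtration with successive quotients in $\hat{X}$. Since $\mathcal{A}_P$ is Azumaya, hence étale-locally $M_n(\mathcal{O}_X)$ and semisimple, this filtration splits: $\mathcal{A}_P \simeq \bigoplus_{L \in \hat{X}} L^{\oplus m_L}$ with $\sum_L m_L = n^2$. Simplicity gives $m_{\mathcal{O}_X} = 1$, and any isotypic component of multiplicity $\geq 2$ would yield a nonscalar global endomorphism of $\mathscr{E}_P$, so $m_L \leq 1$ for all $L$. This produces the $H$-grading of part (2): $\mathcal{A}_P \simeq \bigoplus_{L \in H} L$ for a finite subset $H \subset \hat{X}$ with $|H| = n^2$. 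Since multiplication preserves $\hat{X}$-characters, the $L$- and $L'$-components multiply into the $(L \otimes L')$-component, which forces $H$ to be closed under tensor product --- hence a subgroup --- and exhibits $\mathcal{A}_P$ as a twisted group algebra determined by a cocycle $c \in Z^2(H, \mathbb{G}_m)$.

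The commutator $e(L, L') = c(L, L') c(L', L)^{-1}$ is then the desired alternating pairing on $H$, and the Azumaya condition (equivalently, $\mathcal{Z}(\mathcal{A}_P) = \mathcal{O}_X$) translates directly to non-degeneracy of $e$. A finite abelian group with a non-degenerate alternating pairing to $\mathbb{G}_m$ must have the form $K \oplus K^\vee$, so every element has order dividing $n$; identifying this $n$-torsion subgroup with a subgroup of $X[n]$ via the Weil pairing gives part (1). Conversely, given $(H, e)$ one constructs $\mathcal{A}_P$ as the twisted group algebra $\bigoplus_{L \in H} L$ with multiplication given by any cocycle lifting $e$ (such a cocycle exists by the surjectivity of $H^2(H, \mathbb{G}_m) \to \Hom(\wedge^2 H, \mathbb{G}_m)$ for finite abelian $H$), and verifies that its projectivization is irreducible and homogeneous. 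The main obstacle I foresee is the second step: carefully upgrading Mukai's filtration to a multiplicity-one direct sum, which requires using both the algebra structure and the $\hat{X}$-character decomposition in tandem.
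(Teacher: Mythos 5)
The paper does not prove this proposition; it is a direct citation of \cite[Proposition 3.1]{Bri13}, whose argument runs through the structure theory of the automorphism group scheme of the projective bundle (theta groups / Heisenberg extensions). Your route through Azumaya algebras and Mukai's structure theorem for homogeneous sheaves is a genuinely different approach, and it is a plausible one --- but as written there are two real gaps.

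First, the step ``since $\mathcal{A}_P$ is Azumaya, hence étale-locally $M_n(\mathcal{O}_X)$ and semisimple, this filtration splits'' does not work. Semisimplicity of the fibers of $\mathcal{A}_P$ is a pointwise statement about the algebra structure; Mukai's filtration is a filtration of $\mathcal{A}_P$ by $\mathcal{O}_X$-submodules that need not be two-sided ideals, and there is no reason it interacts with the multiplication at all. What Mukai actually gives you (over $\mathbb{C}$) is a canonical decomposition $\mathcal{A}_P \simeq \bigoplus_{L} L \otimes U_L$ with $L$ running over a finite subset of $\hat X$ and $U_L$ unipotent, and you must show each $U_L$ is trivial. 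For $L = \mathcal{O}_X$ this can be done: the trace map splits off $\mathcal{O}_X$ as a direct summand of $\mathcal{A}_P$, so $U_{\mathcal{O}} = \mathcal{O}\oplus U'$ with $U'$ unipotent, and $H^0(\mathcal{A}_P)=k$ forces $U' = 0$. But the argument for $L \neq \mathcal{O}$ is missing, and this is exactly the hard point you flag at the end.

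Second, the claim that ``any isotypic component of multiplicity $\geq 2$ would yield a nonscalar global endomorphism of $\mathscr{E}_P$'' is not correct. Multiplicity $m_L \geq 2$ gives $\dim \Hom(\mathscr{E}_P, \mathscr{E}_P \otimes \pi^*L) \geq 2$, not an extra endomorphism; and in fact $\dim \Hom(\mathscr{E}_P, \mathscr{E}_P \otimes \pi^*L) = 1$ for $L$ in the support is part of what \Cref{c orthog} later \emph{proves using} the present proposition, so invoking it here would be circular. The correct way to rule out $m_L \geq 2$ is the pointwise representation-theoretic argument: restricting to a fiber, an $H$-grading of $M_n(\mathbb{C})$ with one-dimensional identity component is induced by an abelian subgroup of $PGL_n$ with trivial centralizer, and Stone--von Neumann then forces $|H| = n^2$ with all graded pieces one-dimensional. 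Note, though, that this still presupposes the splitting from the first point.

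Finally, you address the surjectivity of the classification map $P \mapsto (H,e)$ (construct a twisted group algebra from a cocycle lifting $e$) but not its injectivity: one needs that two irreducible homogeneous bundles with the same pair $(H,e)$ are isomorphic, which over $\mathbb{C}$ comes down to the fact that $H^2(H, \mathbb{G}_m) \to \Hom(\wedge^2 H, \mathbb{G}_m)$ is an isomorphism for finite abelian $H$ with divisible coefficients, so the cocycle is unique up to coboundary. You should also say a word about the identification $H \subset \hat X$ versus $H \subset X[n]$ as in the statement; this is a choice of duality convention (e.g.\ a polarization, or the Weil pairing on $n$-torsion), but the reader should not have to guess which.

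Your outline has the right skeleton and, once the splitting of the Mukai decomposition is established, it does give a self-contained alternative to Brion's group-theoretic proof. As it stands, however, the two justifications above are the load-bearing steps and both need to be replaced.
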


\begin{lem}\label{3: l finite groups}
    Let $H$ be a finite group scheme.  The following are equivalent:
    \begin{enumerate}
        \item There exists a non-degenerate alternating pairing $e:H \times H \to \mathbb{G}_m$
        \item $H \simeq \bigoplus_{i=1}^r (\mathbb{Z}/m_i\mathbb{Z})^2$.
    \end{enumerate}
\end{lem}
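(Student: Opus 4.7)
The plan is to prove both directions.

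For $(2) \Rightarrow (1)$: on each factor $(\mathbb{Z}/m_i\mathbb{Z})^2$, define the pairing $e_i\bigl((a,b),(c,d)\bigr) = \zeta_{m_i}^{ad-bc}$ for a fixed primitive $m_i$-th root of unity $\zeta_{m_i}$. This is bimultiplicative, alternating, and non-degenerate; taking the product of these pairings over all factors gives the required pairing on $H$.

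For $(1) \Rightarrow (2)$, I would first reduce the statement. Since $e$ is bimultiplicative into the abelian group $\mathbb{G}_m$, the induced map $H \to \hat{H}$ factors through the abelianization, so non-degeneracy forces $H$ to be abelian. Decomposing $H = \bigoplus_p H_p$ into $p$-primary components, any $h \in H_p$ and $g \in H_q$ with $p \neq q$ satisfy $e(h,g) = 1$, since $e(h,g)$ is killed by coprime integers. Hence the $H_p$ are pairwise orthogonal and each inherits a non-degenerate alternating pairing, reducing to the $p$-primary case.

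For a finite abelian $p$-group $H$ I would induct on $|H|$. Pick $x \in H$ of maximal order $p^a$; by non-degeneracy some $y \in H$ has $e(x,y) = \zeta$ a \emph{primitive} $p^a$-th root of unity, for otherwise $e(x,\cdot)$ would factor through $\mu_{p^{a-1}}$ and $p^{a-1}x$ would vanish. The identity $e(x,y)^{\text{ord}(y)} = 1$ and the maximality of $\text{ord}(x)$ together force $\text{ord}(y) = p^a$, and one checks that $K := \langle x,y\rangle \simeq (\mathbb{Z}/p^a\mathbb{Z})^2$ with $e|_K$ non-degenerate. Setting $K^\perp = \{h \in H : e(h,K) = 1\}$, non-degeneracy of $e|_K$ yields $K \cap K^\perp = 0$; and for any $h \in H$, the values $e(h,x), e(h,y) \in \mu_{p^a}$ can be matched by a unique $k \in K$ since $\zeta$ generates $\mu_{p^a}$, so $h - k \in K^\perp$. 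Thus $H = K \oplus K^\perp$ with $e|_{K^\perp}$ non-degenerate, and induction completes the decomposition into factors $(\mathbb{Z}/p^{c_i}\mathbb{Z})^2$. Reassembling the primary components expresses $H$ as $\bigoplus_i (\mathbb{Z}/m_i\mathbb{Z})^2$, with each $m_i$ a prime power.

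The main obstacle is the orthogonal decomposition $H = K \oplus K^\perp$. The careful choice of $x$ of maximal order together with $y$ such that $e(x,y)$ is \emph{primitive} of order $p^a$ is precisely what makes this splitting possible: without primitivity one cannot realize every character of $K$ as $e(k,\cdot)|_K$ for some $k \in K$, and hence cannot project $H$ onto $K$.
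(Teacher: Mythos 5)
Your proposal is correct, but it takes a genuinely different route from the paper. The paper proves both directions by passing through theta groups: for $(2)\Rightarrow(1)$ it writes $H \simeq K \oplus \hat{K}$ and takes the commutator pairing of the Heisenberg group, citing \cite[(8.21)]{EvdGM}; for $(1)\Rightarrow(2)$ it invokes the existence of a non-degenerate central extension of $H$ by $\mathbb{G}_m$ with commutator pairing $e$ (from the discussion preceding \cite[Theorem 4.3]{Pol96}) and then quotes the classification of theta groups \cite[Lemma 8.24]{EvdGM}. You instead give a direct, self-contained argument: an explicit symplectic form for $(2)\Rightarrow(1)$, and for $(1)\Rightarrow(2)$ the classical hyperbolic-plane splitting (reduce to abelian, then to $p$-primary, then peel off $K \simeq (\mathbb{Z}/p^a\mathbb{Z})^2$ with $K^\perp$ a complement and induct). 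Your approach buys elementarity and avoids citations; the paper's buys brevity at the cost of leaning on two external references. Your key steps -- that $e(x,y)$ must be a primitive $p^a$-th root for some $y$ of order exactly $p^a$, that $\langle x\rangle \cap \langle y\rangle = 0$, and that every $h$ is matched modulo $K^\perp$ by a unique $k\in K$ because $e|_K$ is a perfect pairing -- all check out. One point you use silently that the paper makes explicit: the reduction from ``finite group scheme over $\mathbb{C}$'' to ``finite group'' requires the fact that finite group schemes over $\mathbb{C}$ are \'etale and hence constant; you should state this before treating $H$ as an abstract group.
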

\begin{proof}
If $H \simeq \oplus_{i=1}^r(\mathbb{Z}/m_i\mathbb{Z}^2)$, then since we are working over $\mathbb{C}$, we have that $H \simeq K \oplus \hat{K}$, where $K=\oplus_{i=1}^r(\mathbb{Z}/m_i\mathbb{Z})$.  The Heisenberg group is a central extension of $H$ by $\mathbb{G}_m$ whose commutator pairing has the desired properties (see \cite[(8.21)]{EvdGM}).

Let $e:H \times H \to \mathbb{G}_m$ denote a non-degenerate alternating pairing.  By the discussion preceeding \cite[Theorem 4.3]{Pol96}, there exists a non-degenerate central extension of $H$ by $\mathbb{G}_m$ for which $e$ is the commutator pairing.  The result follows from \cite[Lemma 8.24]{EvdGM} and the fact that finite group schemes over $\mathbb{C}$ are constant.

\end{proof}
For an irreducible homogeneous bundle $P$, let $(H_P,e_P)$ denote the corresponding pair as in \Cref{p classification}. 

\begin{lem}\label{l simple}
    Let $P$ be a homogeneous projective bundle.  Then $P$ is irreducible if and only if $\mathscr{E}_P$ simple.
\end{lem}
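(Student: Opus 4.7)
The plan is to translate simpleness of $\mathscr{E}_P$ into a statement about global sections of the associated Azumaya algebra, and then use the natural decomposition $\mathcal{A}_P = \mathcal{O}_X \oplus \text{ad}(P)$ to compare with the irreducibility condition.

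First I would use \Cref{3: c pbs and vbs}(3), which identifies $\mathcal{A}_P$ with $\pi_*\mathcal{E}nd(\mathscr{E}_P)$. Since $\text{End}(\mathscr{E}_P) = H^0(\mathscr{X}_P, \mathcal{E}nd(\mathscr{E}_P)) = H^0(X, \pi_*\mathcal{E}nd(\mathscr{E}_P))$, this yields
$$\text{End}(\mathscr{E}_P) \simeq H^0(X, \mathcal{A}_P).$$
Hence $\mathscr{E}_P$ is simple if and only if $H^0(X, \mathcal{A}_P) = k$.

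Next I would invoke the defining short exact sequence of the adjoint bundle,
$$0 \to \mathcal{O}_X \to \mathcal{A}_P \to \text{ad}(P) \to 0,$$
recalled from \cite[Section 2]{Bri13}. Since we work in characteristic $0$ and $\mathcal{A}_P$ has degree $n$, the reduced trace map $\tfrac{1}{n}\text{tr}:\mathcal{A}_P \to \mathcal{O}_X$ splits the inclusion $\mathcal{O}_X \hookrightarrow \mathcal{A}_P$. Thus the sequence splits as $\mathcal{O}_X$-modules, giving $\mathcal{A}_P \simeq \mathcal{O}_X \oplus \text{ad}(P)$ and consequently
$$H^0(X, \mathcal{A}_P) \simeq k \oplus H^0(X, \text{ad}(P)).$$

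Combining the two displays, $\text{End}(\mathscr{E}_P) = k$ if and only if $H^0(X, \text{ad}(P)) = 0$, which is the definition of irreducibility. There is no real obstacle here; the only minor point worth mentioning is the splitting of the trace sequence, which is what allows the reverse direction (simple $\Rightarrow$ irreducible) to go through without any subtlety about the connecting map to $H^1(X, \mathcal{O}_X)$.
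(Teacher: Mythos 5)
Your argument is correct and is essentially the paper's: both use \Cref{3: c pbs and vbs}(3) to identify $\text{End}(\mathscr{E}_P)$ with $H^0(X,\mathcal{A}_P)$ and then compare with $H^0(X,\text{ad}(P))$ via $\text{ad}(P)=\mathcal{A}_P/\mathcal{O}_X$. The paper simply asserts $H^0(X,\text{ad}(P)) \simeq \text{End}(\mathscr{E}_P)/k$; your explicit appeal to the reduced-trace splitting supplies the justification (needed for the direction simple $\Rightarrow$ irreducible) that the paper leaves implicit.
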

\begin{proof}
By \Cref{3: c pbs and vbs}(3), $H^0(X,\text{ad}(P)) \simeq \text{End}(\mathscr{E}_P)/k$.  This is $0$ if and only if $\mathscr{E}_P$ is simple.
\end{proof}
\begin{cor}\label{c orthog}
    Let $P$ be an irreducible homogeneous projective bundle, and let $L \in \text{Pic}^0(X)$.  If $L \in H_P$, then $\mathscr{E}_P \simeq \mathscr{E}_P \otimes \pi^*L$.  Otherwise $\text{Ext}^i(\mathscr{E}_P,\mathscr{E}_P \otimes \pi^*L)=0$ for all $i$.  In particular, $H_P \simeq \Sigma(\mathscr{E}_P)$ as subgroups of $\hat{X}$.
\end{cor}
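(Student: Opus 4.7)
The plan is to reduce the computation of $\text{Ext}^i(\mathscr{E}_P, \mathscr{E}_P \otimes \pi^*L)$ to a cohomology computation on $X$ via pushforward, and then exploit the $H_P$-grading on the Azumaya algebra $\mathcal{A}_P$ coming from \Cref{p classification}(2). Combining the identification $\pi_*\mathcal{E}nd(\mathscr{E}_P) \simeq \mathcal{A}_P$ from \Cref{3: c pbs and vbs}(3) with the projection formula yields
\[
\text{Ext}^i(\mathscr{E}_P, \mathscr{E}_P \otimes \pi^*L) \simeq H^i(X, \mathcal{A}_P \otimes L) \simeq \bigoplus_{M \in H_P} H^i(X, M \otimes L).
\]

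Next I would split into cases. If $L \notin H_P$, then every $M \otimes L$ is a nontrivial element of $\text{Pic}^0(X)$ (since $H_P$ is a subgroup), and a nontrivial element of $\text{Pic}^0$ of an abelian variety has vanishing cohomology in all degrees; this gives the Ext-vanishing claim. If instead $L \in H_P$, the translation $M \mapsto M \otimes L$ permutes $H_P$, so only the trivial summand contributes to $H^0$ and the Hom space is one-dimensional, spanned by a distinguished morphism that I still must show is an isomorphism.

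To exhibit this isomorphism in the case $L \in H_P$, I would construct $\varphi_L: \mathscr{E}_P \to \mathscr{E}_P \otimes \pi^*L$ as the adjoint of the inclusion of the $L^{-1}$-graded piece $L^{-1} \hookrightarrow \mathcal{A}_P$; under the identification above this corresponds to the canonical generator of the $\mathcal{O}_X$-summand $L^{-1} \otimes L$ of $\mathcal{A}_P \otimes L$. Forming the analogous $\varphi_{L^{-1}}$, the composition $(\varphi_{L^{-1}} \otimes \text{id}_{\pi^*L}) \circ \varphi_L \in \text{End}(\mathscr{E}_P) = k$ corresponds under the same identification to the product of the chosen generators of $\mathcal{A}_P^{L^{-1}}$ and $\mathcal{A}_P^{L}$ inside $\mathcal{A}_P^{\mathcal{O}_X} = \mathcal{O}_X$. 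Since $\mathcal{A}_P$ is an Azumaya algebra, this pairing is non-degenerate, so the composition is a nonzero scalar, forcing $\varphi_L$ to be an isomorphism.

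Finally, the identification $H_P \simeq \Sigma(\mathscr{E}_P)$ is immediate: the inclusion $H_P \subseteq \Sigma(\mathscr{E}_P)$ is the isomorphism statement just proven, while $L \notin H_P$ forces $\text{Hom}(\mathscr{E}_P, \mathscr{E}_P \otimes \pi^*L) = 0$ and hence $L \notin \Sigma(\mathscr{E}_P)$; that both are determined by their $\mathbb{C}$-points upgrades the equality to an equality of subgroup schemes. The main delicate point is justifying that the multiplication $\mathcal{A}_P^{L^{-1}} \otimes \mathcal{A}_P^L \to \mathcal{A}_P^{\mathcal{O}_X}$ is non-degenerate, which I would handle by working étale-locally, where $\mathcal{A}_P$ becomes a matrix algebra with a Heisenberg-type $H_P$-grading whose graded pieces are invertible elements.
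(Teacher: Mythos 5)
Your proof is correct, and for the crux of the argument it takes a genuinely different route from the paper. Both arguments start from the same $H_P$-graded decomposition $\pi_*\mathcal{E}nd(\mathscr{E}_P) \simeq \bigoplus_{M \in H_P} M$ to compute $\text{Ext}^i(\mathscr{E}_P,\mathscr{E}_P\otimes\pi^*L)$, and the vanishing for $L \notin H_P$ and the one-dimensionality of $\Hom$ for $L \in H_P$ are handled identically. Where you diverge is in showing that the distinguished nonzero $\phi \in \Hom(\mathscr{E}_P,\mathscr{E}_P\otimes\pi^*L)$ is an isomorphism. The paper argues geometrically: it forms $\mathscr{K}=\ker(\phi)$, runs the semi-homogeneity argument of \Cref{l: vector bundle} to show $\mathscr{K}$ is locally free, then invokes Mukai's structure theory of homogeneous vector bundles (\cite[Proposition 4.18]{Mu78}) to build an endomorphism of $\mathscr{E}_P$ factoring through $\mathscr{K}$, which forces $\mathscr{K}=0$. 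Your argument is purely algebraic: compose $\varphi_L$ with $\varphi_{L^{-1}}\otimes\text{id}$ and observe that under the identification with $\mathcal{A}_P$ the composite is the image of the multiplication $\mathcal{A}_P^{L}\otimes\mathcal{A}_P^{L^{-1}}\to \mathcal{A}_P^{\mathcal{O}_X}=\mathcal{O}_X$, whose nonvanishing forces $\varphi_L$ to be an isomorphism. Your ``delicate point'' (non-degeneracy of this multiplication pairing) is real but can be discharged cleanly without the étale-local reduction you sketch: the reduced trace form $(a,b)\mapsto\text{trd}(ab)$ on the Azumaya algebra $\mathcal{A}_P$ is non-degenerate, it pairs the graded piece $\mathcal{A}_P^M$ only with $\mathcal{A}_P^{M^{-1}}$ (since $\text{trd}$ kills every graded piece except $\mathcal{O}_X$), and on $\mathcal{O}_X$ the reduced trace is multiplication by the degree $n\neq 0$; this forces the graded multiplication to be an isomorphism of line bundles. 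The net effect is that your argument avoids both the locally-free-kernel step and the appeal to Mukai's classification, at the cost of needing the multiplicative structure of Brion's grading rather than just the underlying sheaf decomposition; both routes are valid.
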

\begin{proof}
    Recall that there is an isomorphism
    \begin{equation}
        \mathcal{E}nd(\mathscr{E}_P) \simeq \bigoplus_{L \in H_P}L.
    \end{equation}
If $L \in H_P$, then $\text{dim}(\text{Hom}(\mathscr{E}_P,\mathscr{E}_P \otimes \pi^*L))=1$.  Let $\phi:\mathscr{E}_P \to \mathscr{E}_P \otimes \pi^*L$ be a nonzero morphism, and let $\mathscr{K}=\text{ker}(\phi)$.  For any $x \in X$, we can choose a lift $\tau_x:\mathscr{X}_P \to \mathscr{X}_P$ of $t_x$ such that there exists an isomorphism $\lambda:\tau_x^*\mathscr{E}_P \xrightarrow{\sim} \mathscr{E}_P \otimes \mathscr{L}$.  We can also choose an isomorphism $\tau_x^*(\mathscr{E}_P \otimes \pi^*L) \simeq (\mathscr{E}_P \otimes \pi^*L)\otimes \mathscr{L}$.  Rescaling $\lambda$ if necessary, we can make the following diagram commute
$$
\begin{tikzcd}
{\tau_x^*\mathscr{E}_P} \arrow[r, "\tau_x^*\phi"] \arrow[d, swap, "\lambda"] & {\tau_x^*(\mathscr{E}_P \otimes \pi^*L)} \arrow[d, "\rotatebox{90}{$\sim$}"] \\
{\mathscr{E}_P \otimes \mathscr{L}} \arrow[r, swap, "\phi \otimes \mathscr{L}"]           & {(\mathscr{E}_P \otimes \pi^*L)\otimes \mathscr{L}}          
\end{tikzcd}.$$
It follows that for any $x \in X$, $\tau_x^*\mathscr{K} \simeq \mathscr{K} \otimes \mathscr{L}$.  Arguing as in \Cref{l: vector bundle}, we see that $\mathscr{K}$ is locally free.  The vector bundle $\mathcal{H}om(\mathscr{K},\mathscr{E}_P)$ satisfies $t_x^*\mathcal{H}om(\mathscr{K},\mathscr{E}_P) \simeq \mathcal{H}om(\mathscr{K},\mathscr{E}_P)$, which means that it is a homogeneous vector bundle (see \cite[Definition 4.4]{Mu78}).  By \cite[Proposition 4.18]{Mu78}, there exists a nonzero morphism $\psi:\mathscr{E}_P \to \mathscr{K}$.  Composing it with the inclusion $\mathscr{K} \subseteq \mathscr{E}_P$, we obtain a nonzero endomorphism of $\mathscr{E}_P$, which must be multiplication by a scalar.  It follows that either $\mathscr{K}\simeq 0$, or $\mathscr{K} \simeq \mathscr{E}_P$.  The latter cannot happen since $\phi$ is nonzero.  Thus, $\phi$ is an isomorphism.
The second statement follows from tensoring (1) by $L$ and taking cohomology.  The third statement is clear.
\end{proof}
Finally, we are able to prove our main result:

\begin{thm}\label{t criterion}
    Let $X$ be an abelian variety, and let $f:\hat{X} \to Y$ whose kernel $H$ is isomorphic to $\oplus_{i=1}^r(\mathbb{Z}/m_i\mathbb{Z})^2$.  Then $X$ and $Y$ are twisted derived equivalent with vector bundle kernel.
\end{thm}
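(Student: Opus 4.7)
The plan is to construct the kernel directly from Bridgeland's classification and then verify it defines a Fourier-Mukai equivalence via \Cref{c orthog}, which serves as the twisted analogue of \cite[Lemma 4.8]{Or02}. First, by \Cref{3: l finite groups} the hypothesis on $H$ yields a non-degenerate alternating pairing $e:H\times H \to \mathbb{G}_m$; setting $n^2=|H|$, \Cref{p classification} produces an irreducible homogeneous $\mathbb{P}^{n-1}$-bundle $P\to X$ with associated pair $(H_P,e_P)=(H,e)$. \Cref{3: c pbs and vbs} then gives a $\mathbb{G}_m$-gerbe $\pi:\mathscr{X}\to X$ of class $\alpha\in H^2(X,\mathbb{G}_m)$ together with an $\mathscr{X}$-twisted vector bundle $\mathscr{E}:=\mathscr{E}_P$ satisfying $\mathbb{P}(\mathscr{E})\simeq \pi^*P$. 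By \Cref{l homogoneity} and \Cref{l simple}, $\mathscr{E}$ is simple and semi-homogeneous, and \Cref{c orthog} identifies its tensor stabilizer as $\Sigma(\mathscr{E})=H_P=H=\ker(f)$.

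Consequently the orbit map $o_\mathscr{E}:\hat{X}\to \text{Spl}_\mathscr{X}$ factors as $\widetilde{o}_\mathscr{E}\circ f$ for some $\widetilde{o}_\mathscr{E}:Y\to \text{Spl}_\mathscr{X}$. Pulling back the universal gerbe $\mathcal{S}pl_\mathscr{X}\to\text{Spl}_\mathscr{X}$ along $\widetilde{o}_\mathscr{E}$ yields a $\mathbb{G}_m$-gerbe $\rho:\mathscr{Y}\to Y$ of some class $\beta\in H^2(Y,\mathbb{G}_m)$, while pulling back the universal twisted sheaf produces a locally free sheaf $\mathscr{F}$ of appropriate weight on $\mathscr{Y}\times\mathscr{X}$ whose fiber over $y\in Y$ is $\mathscr{E}\otimes \pi^*L$ for any $L\in f^{-1}(y)$. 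This $\mathscr{F}$ is the proposed vector-bundle kernel of a Fourier-Mukai transform $\Phi_\mathscr{F}:\text{D}^b(Y,\beta)\to \text{D}^b(X,\alpha)$.

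For distinct $y,y'\in Y$ the fibers $\mathscr{F}_y$ and $\mathscr{F}_{y'}$ differ by tensoring with $\pi^*L$ for some $L\in\hat{X}\setminus H$, so \Cref{c orthog} yields $\text{Ext}^i(\mathscr{F}_y,\mathscr{F}_{y'})=0$ for all $i$. Combined with simplicity of each fiber and triviality of $\omega_X$, a twisted analogue of the Bondal-Orlov fully-faithfulness criterion applied on the smooth proper gerbe $\mathscr{X}$ shows that $\Phi_\mathscr{F}$ is fully faithful, and the matching of dimensions together with triviality of Serre functors (up to shift) on both sides upgrade this to an equivalence.

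The principal obstacle is the twisted bookkeeping: verifying that $\mathscr{F}$ is genuinely locally free with the correct weight data on $\mathscr{Y}\times\mathscr{X}$ to define a kernel between the relevant twisted derived categories, and that the Bondal-Orlov fully-faithfulness criterion admits a version valid in this gerbe-theoretic setting. The essential geometric input—fiberwise orthogonality of the family—is exactly \Cref{c orthog}, so the remainder reduces to careful gerbe tracking plus the standard Bondal-Orlov style reasoning.
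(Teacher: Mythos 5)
Your proposal is correct and follows essentially the same route as the paper: produce an irreducible homogeneous projective bundle $P$ with $H_P = H$ via \Cref{3: l finite groups} and \Cref{p classification}, identify $\Sigma(\mathscr{E}_P)$ with $H$ using \Cref{c orthog}, factor the orbit map through $Y$, pull back the universal gerbe and sheaf, and verify fully faithfulness by fiberwise simplicity and orthogonality. The "twisted analogue of the Bondal--Orlov fully-faithfulness criterion" that you flag as the remaining uncertainty is precisely \cite[Theorem 3.2.1]{C00}, which the paper invokes, so the bookkeeping worry is resolved by that reference (modulo getting the twist $\beta^{-1}$ rather than $\beta$ on the $Y$ side, as the paper records).
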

\begin{proof}
    By \Cref{3: l finite groups} and \Cref{p classification}, there exists an irreducible homogeneous projective bundle $P \to X$ such that $H_P = H$.  Consider the orbit map $\widetilde{o_{\mathscr{E}_P}}:\hat{X}/\Sigma(\mathscr{E}_P) \to \text{Spl}_{\mathscr{X}_P}$.  By \Cref{c orthog}, $\Sigma(\mathscr{E}_P) \simeq H$ as subgroups of $\hat{X}$, so we obtain a map $o:Y \to \text{Spl}_{\mathscr{X}_P}$.  The map $o$ corresponds to a twisted sheaf $\mathscr{U}$ on $\mathscr{Y} \times \mathscr{X}_P$, where $\pi_Y:\mathscr{Y} \to Y$ is the pullback of the gerbe $\mathcal{S}pl_{\mathscr{X}_P}$ along $o$.  Let $\alpha \in H^2(X,\mathbb{G}_m)$ and $\beta \in H^2(Y,\mathbb{G}_m)$ denote the cohomology classes of $\mathscr{X}_P$ and $\mathscr{Y}_P$ respectively, and consider the Fourier-Mukai transform $\Phi_{\mathscr{U}}:\text{D}^b(Y,\beta^{-1}) \to \text{D}^b(Y,\alpha)$.  To see that $\Phi_{\mathscr{U}}$ is an equivalence, it suffices to show that it satisfies the conditions of \cite[Theorem 3.2.1]{C00}.  The final condition is trivially satisfied since $\omega_X\simeq \mathcal{O}_X$, so it remains to show that $\Phi_{\mathscr{U}}$ is fully faithful.  This amounts to verifying the following conditions:
    \begin{enumerate}
        \item For each $y \in \mathscr{Y}(k)$, $\text{End}(\mathscr{U}_y)=k$
        \item For each pair of points $y_1,y_2 \in \mathscr{Y}(k)$, $\text{Ext}^i(\mathscr{U}_{y_1},\mathscr{U}_{y_2})=0$ unless $y_1 \simeq y_2$ and $0 \leq i \leq \text{dim}(Y)$.
    \end{enumerate}
Let $y \in \mathscr{Y}(k)$.  Then $\mathscr{U}_y \simeq \mathscr{E}_P \otimes \pi^*L$, where $f(L)=\pi_Y(y)$.  The first condition follows from \Cref{l simple}, and the second follows from \Cref{c orthog}.  Since $\mathscr{U}_y$ is a vector bundle for each $y\in \mathscr{Y}(k)$, $\mathscr{U}$ is a vector bundle.
\end{proof}
We prove the following partial converse to \Cref{t criterion}
\begin{thm}
    Let $\Phi_\mathscr{U}:\text{D}^b(X,\alpha) \to \text{D}^b(Y,\beta)$ be a twisted derived equivalence between two abelian varieties.  If $\mathscr{U}$ is locally free, then there exists an isogeny $f:\hat{X} \to Y$ whose kernel is isomorphic to $\oplus_{i=1}^r(\mathbb{Z}/m_i\mathbb{Z})^2$.
\end{thm}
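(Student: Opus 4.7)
The plan is to show that the fiber $\mathscr{E} := \mathscr{U}_{y_0}$ at any chosen $y_0 \in \mathscr{Y}(k)$ is a simple, semi-homogeneous twisted vector bundle on $\mathscr{X}$, and then to extract $f$ from the $\hat{X}$-orbit of $[\mathscr{E}]$ in $\text{Spl}_{\mathscr{X}}$. Since $\omega_Y \simeq \mathcal{O}_Y$, the inverse of $\Phi_{\mathscr{U}}$ sends $k(y)$ to $\mathscr{U}_y^\vee[\dim Y]$, so $\Phi_{\mathscr{U}}$ being an equivalence yields $\text{Ext}^i_X(\mathscr{U}_y, \mathscr{U}_y) \simeq \text{Ext}^i_Y(k(y), k(y))$ for all $i, y$ and $\text{Ext}^i_X(\mathscr{U}_{y_1}, \mathscr{U}_{y_2}) = 0$ for $y_1 \neq y_2$. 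In particular $\mathscr{E}$ is simple with $\dim \text{Ext}^1(\mathscr{E}, \mathscr{E}) = \dim T_{y_0}Y = \dim X$, and a twisted analog of Mukai's criterion (see \cite[Proposition 6.17]{Mu78} and \cite[Section 3]{Li25}) then implies that $\mathscr{E}$ is semi-homogeneous.

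By \Cref{l homogoneity} and \Cref{l simple}, $P := \mathbb{P}(\mathscr{E})$ is therefore an irreducible homogeneous projective bundle on $X$, and by \Cref{p classification} it is classified by a pair $(H, e)$ where $H \subseteq \hat{X}$ is a finite subgroup of order $\text{rank}(\mathscr{E})^2$ and $e$ is a non-degenerate alternating pairing. By \Cref{c orthog}, $\Sigma(\mathscr{E}) = H$ as subgroups of $\hat{X}$.

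To build the isogeny, I would apply \Cref{l: kernels} with $\sigma = \eta_X(L)$ for $L \in \hat{X}$: letting $\tau := \Psi(\eta_X(L))$ with rigidification $\tau_0 \in \text{Aut}^0(Y) = Y$, and using \Cref{p: tensor}, restriction of the identity $(\eta_X(L) \times \tau)^*\mathscr{U} \simeq \mathscr{U}$ to the fiber over $y_0$ gives an isomorphism
\[\mathscr{E} \otimes \pi^*L \simeq \mathscr{U}_{y_0 - \mathfrak{b}(L)}\]
in $\text{Spl}_{\mathscr{X}}$, where $\mathfrak{b}: \hat{X} \to Y$ is the group homomorphism $L \mapsto q_Y(\Psi(\eta_X(L)))$. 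Combined with the orthogonality of distinct fibers, this forces $\ker \mathfrak{b} = \Sigma(\mathscr{E}) = H$. Since $H$ is finite and $\dim \hat{X} = \dim Y$ (by \Cref{t2}), $\mathfrak{b}$ is an isogeny. Setting $f := \mathfrak{b}$, we have $\ker f = H$ equipped with the non-degenerate alternating pairing $e$, and \Cref{3: l finite groups} concludes that $H \simeq \bigoplus_{i=1}^r (\mathbb{Z}/m_i\mathbb{Z})^2$.

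The hard part will be the appeal to the twisted Mukai criterion. In the untwisted setting, the implication "for a simple vector bundle $E$, $\dim \text{Ext}^1(E, E) = \dim X$ implies $E$ is semi-homogeneous" proceeds by analyzing the $X \times \hat{X}$-orbit of $[E]$ in the moduli of simple sheaves and combining it with the inequality $\dim \text{Ext}^1(E, E) \geq 2\dim X - \dim \Sigma(E)$. The twisted analog should follow by replacing $X \times \hat{X}$ with $\text{Aut}^0_{\mathscr{X}}$ acting on $\text{Spl}_{\mathscr{X}}$ via \Cref{l: kernels}, or alternatively by directly invoking the framework for semi-homogeneous twisted vector bundles developed in \cite[Section 3]{Li25}.
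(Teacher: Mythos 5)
Your proposal follows the same overall architecture as the paper's proof — use Olsson's exact sequence to build a homomorphism from $\hat{X}$ to $Y$, identify its kernel with the stabilizer $\Sigma(\mathscr{E})$ of a simple semi-homogeneous fiber $\mathscr{E}$, invoke Brion's classification via \Cref{l homogoneity}, \Cref{l simple}, \Cref{p classification}, \Cref{c orthog}, and finish with \Cref{3: l finite groups}. The genuinely different (and heavier) step is how you establish semi-homogeneity of $\mathscr{E}$. You route through a twisted analogue of Mukai's criterion: $\mathscr{E}$ simple with $\dim\text{Ext}^1(\mathscr{E},\mathscr{E}) = \dim X$ implies $\mathscr{E}$ is semi-homogeneous. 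This is a nontrivial result that the paper does not prove and deliberately does not import from \cite{Li25}. The paper instead proves semi-homogeneity almost for free: once one knows $\Sigma(\mathscr{E})$ is finite (standard for simple sheaves, not via Brion), the homomorphism $f$ is an isogeny, hence surjective; then \Cref{l: kernels} directly produces, for every $y \in Y$, a lift of $t_y$ fixing $\mathscr{E}$, which is exactly semi-homogeneity. Your argument has a slight chicken-and-egg structure as a result: you need semi-homogeneity to apply Brion and deduce that $\Sigma(\mathscr{E}) = H_P$ is finite, which you then use to show $\mathfrak{b}$ is an isogeny; the paper gets finiteness of $\Sigma(\mathscr{E})$ independently of Brion, so it can deduce surjectivity of $f$ first and semi-homogeneity second. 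You could tighten your proof to match by noting that the dual map $\mathfrak{a} := q_X \circ \Psi^{-1} \circ \eta_Y : \hat{Y} \to X$ is also an isogeny (again by finiteness of the stabilizer of a simple sheaf plus dimension count), and that surjectivity of $\mathfrak{a}$ combined with \Cref{l: kernels} gives semi-homogeneity of $\mathscr{U}_{y_0}$ directly, bypassing the twisted Mukai criterion entirely. The remaining difference — you work with $\mathscr{E} = \mathscr{U}_{y_0}$ on $\mathscr{X}$ while the paper takes $\mathscr{E} = \Phi_{\mathscr{U}}(k(x))$ on $\mathscr{Y}$ — is cosmetic (the two choices are exchanged by passing to the inverse equivalence), and your identification $\ker\mathfrak{b} = \Sigma(\mathscr{E})$ via orthogonality of distinct fibers is correct.
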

\begin{proof}
    Let $\pi_X:\mathscr{X} \to X$ and $\pi_Y:\mathscr{Y} \to Y$ denote the $\mathbb{G}_m$-gerbes corresponding to $\alpha$ and $\beta$ respectively.  Let $x \in \mathscr{X}(k)$, let $k(x)$ denote the twisted skyscraper sheaf supported at $x$, and let $\mathscr{E}=\mathscr{U}_x=\Phi_{\mathscr{U}}(k(x))$.  By \Cref{t: Olsson}, we have a diagram
    $$
\begin{tikzcd}
{0} \arrow[r] & {\hat{X}} \arrow[r] & {\text{Aut}^0_{\mathscr{X}}} \arrow[r, "q_X"] \arrow[d, "\rotatebox{90}{$\sim$}"] & {X} \arrow[r] & {0} \\
{0} \arrow[r] & {\hat{Y}} \arrow[r] & {\text{Aut}^0_{\mathscr{Y}}} \arrow[r, "q_Y"]           & {Y} \arrow[r] & {0}
\end{tikzcd}$$
in which the vertical arrow is an isomorphism.  From this we obtain a morphism $f:\hat{X} \to Y$.  Since $\text{Pic}^0(X)=\{\sigma \in \text{Aut}^0(\mathscr{X}): \sigma^*k(x)\simeq k(x)\}$, \Cref{l: kernels} implies that the image of $\text{Pic}^0(X)$ is $\{\sigma \in \text{Aut}^0(\mathscr{Y}): \sigma^*\mathscr{E} \simeq \mathscr{E}\}$.  Therefore the $k$-points of $H:=\text{ker}(f)$ are precisely those $L \in \text{Pic}^0(Y)$ such that $\mathscr{E}\otimes \pi_Y^*L \simeq \mathscr{E}$.  Thus $H \simeq \Sigma(\mathscr{E})$ is finite, and $f$ is an isogeny.  By \Cref{3: c pbs and vbs}(1), there exists a projective bundle $Q \to Y$ such that $\mathscr{Y} \simeq \mathscr{Y}_Q$ and $\mathscr{E} \simeq \mathscr{E}_Q$.  Surjectivity of $f$ implies that $\mathscr{E}$ is semi-homogeneous so that, by \Cref{l homogoneity}, $Q$ is homogeneous.  Since $\mathscr{E}$ is simple, \Cref{l simple} implies that $Q$ is irreducible.  By \Cref{c orthog}, we have isomorphisms $H \simeq \Sigma(\mathscr{E}) \simeq \Sigma(\mathscr{E}_Q) \simeq H_Q$, where $H_Q$ is as in \Cref{p classification}.  It follows from \Cref{3: l finite groups} that $H$ has the desired properties.
\end{proof}
Putting everything together, we have:
\begin{cor}\label{3: c main}
    Let $X$ and $Y$ be abelian varieties.  Then the following are equivalent.
    \begin{enumerate}
        \item There exists an isogeny $f:\hat{X} \to Y$ whose kernel is isomorphic to $\oplus_{i=1}^r (\mathbb{Z}/m_i\mathbb{Z})^2$.
        \item There exists a twisted derived equivalence between $X$ and $Y$ with vector bundle kernel.
    \end{enumerate}
\end{cor}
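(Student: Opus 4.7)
The plan is to prove this corollary by simply combining the two results that immediately precede it. The statement is a biconditional, and both implications have already been established in isolation, so the work is to observe that nothing additional is required beyond citing them.

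For the direction $(1) \Rightarrow (2)$, I would invoke \Cref{t criterion} verbatim. Given an isogeny $f:\hat{X} \to Y$ whose kernel $H$ has the form $\oplus_{i=1}^r (\mathbb{Z}/m_i\mathbb{Z})^2$, \Cref{3: l finite groups} equips $H$ with a non-degenerate alternating pairing, and then \Cref{p classification} produces an irreducible homogeneous projective bundle $P \to X$ with $H_P = H$. The proof of \Cref{t criterion} then assembles the twisted Fourier-Mukai kernel $\mathscr{U}$ from the orbit map $\widetilde{o_{\mathscr{E}_P}}$ of $\hat{X}$ on the moduli of simple twisted sheaves and verifies, using \Cref{l simple} and \Cref{c orthog}, that $\Phi_\mathscr{U}$ satisfies the hypotheses of the twisted analogue of Bridgeland's criterion (\cite[Theorem 3.2.1]{C00}). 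Crucially, the kernel $\mathscr{U}$ is a vector bundle because each $\mathscr{U}_y \simeq \mathscr{E}_P \otimes \pi^*L$ is, so $(2)$ is supplied with the required local freeness.

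For the converse $(2) \Rightarrow (1)$, I would cite the theorem stated immediately before the corollary. Starting from a twisted derived equivalence $\Phi_\mathscr{U}$ with locally free kernel, \Cref{t: Olsson} gives a commutative diagram of exact sequences and hence a morphism $f:\hat{X} \to Y$. The description of $\text{Pic}^0(X)$ as the stabilizer of a twisted skyscraper sheaf, combined with \Cref{l: kernels}, identifies $\ker(f)$ with $\Sigma(\mathscr{E})$ for $\mathscr{E} = \Phi_\mathscr{U}(k(x))$, forcing $f$ to be an isogeny. Applying \Cref{3: c pbs and vbs}, \Cref{l homogoneity}, \Cref{l simple}, and \Cref{c orthog} in turn shows that $\mathscr{E}$ arises from an irreducible homogeneous projective bundle $Q$ with $\ker(f) \simeq H_Q$, and \Cref{3: l finite groups} then yields the required structure of $\ker(f)$.

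Since both implications are already contained in the preceding results, there is no substantive obstacle: the proof of the corollary reduces to the single sentence ``combine \Cref{t criterion} with the preceding theorem.'' The only minor bookkeeping is to verify that the two results are stated with matching hypotheses on $X$, $Y$, and the kernel, which they are.
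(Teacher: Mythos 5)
Your proposal is correct and takes exactly the same approach as the paper: the corollary is labeled ``Putting everything together,'' and its content is precisely that \Cref{t criterion} supplies $(1)\Rightarrow(2)$ while the unnamed theorem immediately preceding the corollary supplies $(2)\Rightarrow(1)$.
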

\section{Symplectic Isomorphisms}
In this section, we complete the proof of \Cref{i t: main} by proving the following result.

\begin{thm}\label{4: t main}
    Let $X$ and $Y$ be complex abelian varieties.  If $X$ and $Y$ are twisted derived equivalent, then there exists an isogeny $f:\hat{X} \to Y$ whose kernel is isomorphic to $\oplus_{i=1}^r (\mathbb{Z}/m_i\mathbb{Z})^2$. 
\end{thm}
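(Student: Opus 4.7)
The plan is to convert the hypothesis of twisted derived equivalence into the existence of a symplectic isomorphism of abelian varieties, and then extract the required isogeny from this symplectic data by a modification of \cite[Lemma 2.2.7(ii)]{Pol12}.

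First, I would apply \cite[Theorem 1.1]{LLT25}: under the assumption that $\text{D}^b(X,\alpha)\simeq \text{D}^b(Y,\beta)$, one obtains an isomorphism $F:X\times \hat{X}\xrightarrow{\sim} Y\times\hat{Y}$ respecting the canonical symplectic forms on source and target (suitably twisted by $\alpha$ and $\beta$). This is exactly the ``linear algebra'' of a twisted Fourier--Mukai kernel between abelian varieties and converts the problem into one about symplectic abelian varieties in the sense of Polishchuk.

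Next, I would consider the Lagrangian inclusion $\iota:\hat{X}\hookrightarrow X\times\hat{X}$ and the projection $p_Y:Y\times\hat{Y}\to Y$, and set
$$
f := p_Y\circ F\circ \iota: \hat{X}\longrightarrow Y.
$$
A modification of \cite[Lemma 2.2.7(ii)]{Pol12}, adapted to handle two distinct abelian varieties and non-trivial twist classes, allows us (possibly after post-composing $F$ with a symplectic automorphism of $Y\times \hat{Y}$ coming from a line bundle twist or a dual-isogeny automorphism, which does not affect the derived equivalence class) to arrange simultaneously that $f$ is surjective with finite kernel and that the restriction of the symplectic form to $F(\hat{X})\cap \hat{Y}$ descends to a non-degenerate alternating pairing on $\ker(f)$ valued in $\mathbb{G}_m$. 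Once such a pairing is in hand, \Cref{3: l finite groups} identifies $\ker(f)\simeq \bigoplus_{i=1}^r(\mathbb{Z}/m_i\mathbb{Z})^2$, as required.

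The main obstacle is the adaptation of Polishchuk's lemma itself. His original statement concerns autoequivalences of a single untwisted abelian variety, whereas here we must handle distinct source and target and non-trivial gerbes on both sides. The subtle point is arranging that a single composition can simultaneously force transversality of $F(\hat{X})$ and $\hat{Y}$ in $Y\times \hat{Y}$ and non-degeneracy of the resulting pairing on $\ker(f)$, all without destroying the symplectic/derived equivalence class. Once this is secured, the Lagrangian nature of $\hat{X}\subset X\times\hat{X}$ together with \Cref{3: l finite groups} makes the final structural statement about $\ker(f)$ essentially automatic. Note also that the vector-bundle-kernel hypothesis of \Cref{3: c main} is not needed here: the whole argument works on the abstract symplectic level and never produces the Fourier--Mukai kernel directly, which is why \cite[Theorem 1.1]{LLT25} is invoked.
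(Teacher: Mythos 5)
Your high-level strategy matches the paper's: invoke \cite[Theorem 1.1]{LLT25} to translate the twisted derived equivalence into a symplectic isomorphism, then use a modification of Polishchuk's transversality lemma to arrange a Lagrangian complement, and finish with \Cref{3: l finite groups} to identify the kernel. However, there is a concrete gap in your understanding of what \cite[Theorem 1.1]{LLT25} actually produces. It does \emph{not} give an isomorphism $F:X\times\hat{X}\to Y\times\hat{Y}$; rather, it gives a symplectic isomorphism $A_{(X,\alpha)}\simeq A_{(Y,\beta)}$, where $A_{(X,\alpha)}=(X\times\hat{X})/K_\alpha$ for a finite subgroup $K_\alpha$ encoding the Brauer class. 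Your candidate morphism $f:=p_Y\circ F\circ\iota$ is not even defined without saying how to lift or descend through these quotients.

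This is not merely a cosmetic issue, because it is precisely what makes the ``modification of Polishchuk's lemma'' non-trivial. Polishchuk's \cite[Lemma 2.2.7(ii)]{Pol12} produces Lagrangians transverse to a given one inside a product $A\times\hat{A}$, but here one must work inside the \emph{quotient} $A_{(Y,\beta)}$. The paper's way out is to observe that the quotient map $\pi:X\times\hat{X}\to A_{(X,\alpha)}$ is a \emph{symplectic isogeny} (in the sense that $\hat{\pi}\psi_{A_{(X,\alpha)}}\pi = n\,\psi_{X\times\hat X}$), to prove that Lagrangians and finite intersections pull back and push forward along such isogenies (\Cref{4: l lagrangians}, \Cref{4: l finiteness}), and thereby to transport Polishchuk's statement from the product to the quotient (\Cref{4: p finiteness}). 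Your proposal is silent on this quotient/descent step, and waving at ``post-composing with a symplectic automorphism'' does not address it, since the issue is not a choice of automorphism but the fact that the ambient symplectic abelian variety is not a product. If you replace your $X\times\hat{X}\to Y\times\hat{Y}$ with the correct $A_{(X,\alpha)}\to A_{(Y,\beta)}$ and supply the symplectic-isogeny bookkeeping, your argument becomes the paper's.
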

We prove this using the language of symplectic abelian varieties, which we recall below.  Using this language, we give a general outline of the proof.  To produce an isogeny as in \Cref{4: t main}, it suffices find a symplectic abelian variety $A$ satisfying the following conditions
\begin{enumerate}
    \item $A$ contains $\hat{X}$ and $\hat{Y}$ as Lagrangians,
    \item $\hat{X} \cap \hat{Y}$ is finite
\end{enumerate}
(see \Cref{4 l: finite intersections}).  \cite[Theorem 1.1]{LLT25} allows us to find $A$ satisfying condition $(1)$.  While $\hat{X} \cap \hat{Y}$ may not be finite, we show that there exists another embedding $\iota:\hat{Y} \hookrightarrow A$ witnessing $\hat{Y}$ as a Lagrangian with finite intersection with $\hat{X}$ (see \Cref{4: p finiteness}).  In the case where $A=Y \times \hat{Y}$, the existence of such an $\iota$ was established in \cite[Lemma 2.2.7(ii)]{Pol12}.  In general, $A$ is only isogenous to $Y \times \hat{Y}$; however, after establishing some basic lemmas on what we call \textit{symplectic isogenies}, we can obtain the existence of $\iota$ as a corollary of \cite[Lemma 2.2.7(ii)]{Pol12}.

\subsection{Symplectic Abelian Varieties}
Let $A$ be an abelian variety.  A \textit{biextension} of $A^2$ (by $\mathbb{G}_m$) is line bundle $L_A$ on $A^2$ together with rigidifcations
$$\varepsilon:L_A\mid_{A \times 0} \xrightarrow{\sim} \mathcal{O}_A,\quad \delta:L_A\mid_{0 \times A} \xrightarrow{\sim} \mathcal{O}_A$$
whose restrictions to $0 \times 0$ coincide.  Note that $L_A$ determines a homomorphism $\psi_{L_A}:A \to \hat{A}$ and that $L_A$ is uniquely determined by $\psi_{L_A}$.  We say that the biextension $L_A$ is \textit{symplectic} if $\psi_{L_A}$ is an isomorphism satisfying $\widehat{\psi_{L_A}}=-\psi_{L_A}$.

\begin{defn}
    A \textit{symplectic abelian variety} is a pair $(A,L_A)$, where $A$ is an abelian variety and $L_A$ is a symplectic biextension of $A^2$.
\end{defn}
When no confusion arises, will often simply write $A$ for a symplectic abelian variety, and we will write $\psi_A$ instead of $\psi_{L_A}$.
\begin{defn}
    An abelian subvariety $Z$ of a symplectic abelian variety $A$ is called \textit{isotropic} if the composition
    $$Z \xrightarrow{i} A \xrightarrow{\hat{i}\psi_A} \hat{Z}$$
    is zero, where $i:Z \hookrightarrow A$ is the inclusion.  An isotropic abelian subvariety is \textit{Lagrangian} if the above sequence is exact.
\end{defn}
Note that if $Z \subset A$ is Lagrangian, then there is an isomorphism $A/Z \simeq \hat{Z}$.  If $W$ and $Z$ are Lagrangian abelian subvarieties of $A$ whose intersection is finite, then the quotient map restricts to an isogeny $W \to \hat{Z}$.  Moreover, by \cite{Pol96}, there is a non-degenerate alternating pairing $(Z \cap W) \times (Z \cap W) \to \mathbb{G}_m$.  Applying \Cref{3: l finite groups}, we obtain the following.

\begin{lem}\label{4 l: finite intersections}
    Let $A$ be a symplectic abelian variety, and let $Z,W \subset A$ be Lagrangian abelian subvarieties.  Then there is an isogeny $W \to \hat{Z}$ whose kernel is isomorphic to $\bigoplus_{i=1}^r (\mathbb{Z}/m_i\mathbb{Z})^2$.
\end{lem}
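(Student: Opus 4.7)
The plan is to assemble the result directly from the ingredients already collected in the paragraph preceding the lemma, using the (implicit) hypothesis that $Z \cap W$ is finite. Since $Z$ is Lagrangian, the exact sequence
\[
0 \to Z \xrightarrow{i} A \xrightarrow{\hat{i}\psi_A} \hat{Z} \to 0
\]
identifies the quotient $A \to A/Z$ with the isogeny (in fact isomorphism) $A \to \hat{Z}$ induced by the symplectic form. I would first restrict this map to $W$, obtaining a homomorphism $\varphi \colon W \to \hat{Z}$ whose kernel, as a group scheme, is exactly $Z \cap W$.

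Next I would verify that $\varphi$ is an isogeny. Its kernel $Z \cap W$ is finite by hypothesis, so $\varphi$ is quasi-finite; hence $\dim \mathrm{im}(\varphi) = \dim W$. Because both $W$ and $Z$ are Lagrangian, $\dim W = \dim Z = \tfrac{1}{2}\dim A = \dim \hat{Z}$, so $\varphi$ is surjective and therefore an isogeny $W \to \hat{Z}$ with kernel $Z \cap W$.

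It remains to identify the structure of this kernel. The paragraph preceding the statement invokes \cite{Pol96} to produce a non-degenerate alternating pairing $(Z \cap W) \times (Z \cap W) \to \mathbb{G}_m$ on the finite group scheme $Z \cap W$. Plugging this into \Cref{3: l finite groups} yields an isomorphism
\[
Z \cap W \;\simeq\; \bigoplus_{i=1}^{r} (\mathbb{Z}/m_i\mathbb{Z})^{2},
\]
which completes the proof.

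I do not anticipate any real obstacle: every nontrivial input (the identification $A/Z \simeq \hat{Z}$, the existence of the alternating pairing on $Z \cap W$, and the classification of finite group schemes admitting such a pairing) has already been established or cited. The only mildly delicate point is the dimension count ensuring surjectivity of $\varphi$, and that follows from the Lagrangian hypothesis applied to both $Z$ and $W$.
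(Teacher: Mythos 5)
Your proposal is correct and follows the same route as the paper, which does not write out a formal proof but instead derives the lemma directly from the discussion in the preceding paragraph (identification $A/Z \simeq \hat{Z}$, restriction to $W$, the non-degenerate alternating pairing on $Z \cap W$ from \cite{Pol96}, and then \Cref{3: l finite groups}). You were also right to flag that the finiteness of $Z \cap W$ is an implicit hypothesis missing from the statement as written but present in the surrounding text and supplied at the point of application via \Cref{4: p finiteness}; without it the restricted quotient map need not be an isogeny (e.g.\ $Z = W$).
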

\subsection{Symplectic Isogenies}
Let $(A,L_A)$ and $(B,L_B)$ be symplectic abelian varieties.  We say that an isomorphism $f:A \to B$ is \textit{symplectic} (or \textit{isometric}) if either one of the following equivalent conditions hold:
\begin{enumerate}
    \item $L_A \simeq (f \times f)^*L_B$
    \item $\psi_{L_A}=\hat{f}\psi_{L_B} f.$
\end{enumerate}
The following more general class of morphisms arises naturally when studying symplectic abelian varieties.
\begin{defn}
    Let $A$ and $B$ be symplectic abelian varieties.  A \textit{symplectic isogeny} between $A$ and $B$ is an isogeny $f:A \to B$ such that $n\psi_A=\hat{f}\psi_Bf$ for some $n \in \mathbb{Z}$.
\end{defn}
We will need the following two lemmas.

\begin{lem}\label{4: l lagrangians}
    Let $f:A \to B$ be a symplectic isogeny, let $Z \subset B$ be an abelian subvariety, and let $Z'\subset A$ denote the neutral component of $f^{-1}(Z)$.  Then $Z$ is Lagrangian if and only if $Z'$ is.
\end{lem}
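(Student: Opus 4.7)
My plan is to separate the Lagrangian condition into isotropicness plus a half-dimension condition, and transfer each across the symplectic isogeny. Let $i := i_Z : Z \hookrightarrow B$ and $i' := i_{Z'} : Z' \hookrightarrow A$ be the inclusions, and let $g : Z' \to Z$ denote the restriction of $f$. First I would check that $g$ is itself an isogeny: since $f$ is an isogeny, $\dim f^{-1}(Z) = \dim Z$ and hence $\dim Z' = \dim Z$, while $\ker g \subseteq \ker f$ is finite; the image $g(Z')$ is therefore a connected subvariety of $Z$ of the same dimension, so equal to $Z$. In particular $\hat g$ is also an isogeny.

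The key computation is to pre- and post-compose the defining identity $n\psi_A = \hat f \psi_B f$ by $i'$ and $\hat i'$. Using $f \circ i' = i \circ g$ together with the dual identity $\hat i' \hat f = \widehat{f \circ i'} = \hat g \hat i$, this produces
$$ n \cdot (\hat i' \psi_A i') = \hat g \,(\hat i \psi_B i)\, g. $$
From this, isotropicness transfers in both directions. If $\hat i \psi_B i = 0$, then the right-hand side vanishes, and since $n \neq 0$ and $\text{Hom}(Z',\widehat{Z'})$ is torsion-free, $\hat i' \psi_A i' = 0$. Conversely, if $\hat i' \psi_A i' = 0$, the morphism $(\hat i \psi_B i) \circ g : Z' \to \hat Z$ lands in the finite subgroup $\ker \hat g$; connectedness of $Z'$ forces it to vanish, and surjectivity of $g$ then forces $\hat i \psi_B i = 0$.

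Finally, I would upgrade isotropic to Lagrangian by a dimension count. For any isotropic abelian subvariety $Z \subseteq A$, the composition $\hat i \psi_A : A \to \hat Z$ is surjective (it factors the isomorphism $\psi_A$ through the surjection $\hat i$ dual to the embedding $i$), so $\dim \ker(\hat i \psi_A) = \dim A - \dim Z$. Consequently $Z$ coincides with the neutral component of $\ker(\hat i \psi_A)$, i.e.\ is Lagrangian, precisely when $\dim Z = \tfrac{1}{2}\dim A$. Since $\dim A = \dim B$ and $\dim Z = \dim Z'$, this half-dimension condition is equivalent on both sides, closing the equivalence. The only point requiring care is tracking the duality $\hat i' \hat f = \hat g \hat i$; isolating the auxiliary isogeny $g$ at the outset keeps that manipulation clean, and I do not anticipate a substantive obstacle beyond this bookkeeping.
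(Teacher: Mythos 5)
Your argument is correct and follows the same skeleton as the paper's proof for the substantive part: you isolate the same auxiliary isogeny $g\colon Z'\to Z$, derive the same identity $n\,(\hat{i}'\psi_A i') = \hat{g}\,(\hat{i}\psi_B i)\,g$ by pre- and post-composing $n\psi_A=\hat f\psi_B f$, and transfer isotropicness in both directions exactly as the paper does (your converse direction is in fact a bit more careful, spelling out that the image of $(\hat i\psi_B i)g$ lands in the finite group $\ker\hat g$ and hence vanishes by connectedness). Where you diverge is the last step: the paper invokes \cite[Lemma 2.2.3(ii)]{Pol12}, which is precisely the assertion that an isotropic abelian subvariety of half the dimension is Lagrangian, while you prove this yourself by a dimension count. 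That makes your proof self-contained, which is a mild advantage. One point you should spell out, since you are reproving rather than citing the lemma: Lagrangian, per the paper's definition, means the sequence $Z\to A\to\hat Z$ is exact, i.e.\ $Z$ is the \emph{whole} kernel of $\hat i\psi_A$, not merely its neutral component. This does coincide with your criterion, because $\ker(\hat i\psi_A)=\psi_A^{-1}(\ker\hat i)$ and $\ker\hat i=\widehat{A/Z}$ is connected (the dual of $0\to Z\to A\to A/Z\to 0$ is again exact), so $\ker(\hat i\psi_A)$ is automatically connected and equals $Z$ once the dimensions match. That connectedness is the actual content of the cited Polishchuk lemma, and is worth one sentence when you opt to prove it directly.
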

\begin{proof}
    Let $i:Z \hookrightarrow B$ (resp. $j:Z' \hookrightarrow A$) denote the inclusion of $Z$ (resp. $Z'$), and let $g:Z' \to Z$ denote the isogeny induced by $f$.  We have a commutative diagram
    $$
\begin{tikzcd}
{Z'} \arrow[r, "j"] \arrow[d, swap, "g"] & {A} \arrow[d, swap, "f"] \arrow[r, "n\psi_A"] & {\hat{A}} \arrow[r, "\hat{j}"]           & {\widehat{Z'}}           \\
{Z} \arrow[r, swap, "i"]           & {B} \arrow[r, swap, "\psi_B"]           & {\hat{B}} \arrow[u,  swap, "\hat{f}"] \arrow[r, swap, "\hat{i}"] & {\hat{Z}} \arrow[u, swap, "\hat{g}"]
\end{tikzcd}.$$
If $Z$ is Lagrangian, then $i\psi_B\hat{i}=0$ so that $n(\hat{j}\psi_Aj)=0$.  It follows that $\hat{j}\psi_Bj=0$.  By \cite[Lemma 2.2.3(ii)]{Pol12}, we have that $Z'$ is Lagrangian.

If $Z'$ is Lagrangian, then we have that $\hat{g}(\hat{i}\psi_Bi)g=0$, so that $\hat{i}\psi_Bi=0$.  By \cite[Lemma 2.2.3(ii)]{Pol12}, we have that $Z$ is Lagrangian.
\end{proof}
\begin{lem}\label{4: l finiteness}
    Let $f:A \to B$ be an isogeny, let $Z$ and $W$ be abelian subvarieties of $B$, and let $Z'$ $($resp. $W')$ denote the neutral component of $f^{-1}(Z)$ $($resp. $f^{-1}(W))$.  If $Z'\cap W'$ is finite, then so is $Z \cap W$.
\end{lem}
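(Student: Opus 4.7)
The plan is to prove the contrapositive: assuming that $Z\cap W$ is infinite, I will construct a positive-dimensional abelian subvariety of $A$ contained in $Z'\cap W'$. The argument is essentially a diagram chase combined with the fact that positive-dimensional subgroup schemes lift through an isogeny.

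First I would let $V$ denote the neutral component of $Z\cap W$, viewed as a closed subgroup scheme of $B$. Under the hypothesis that $Z\cap W$ is infinite, $V$ is an abelian subvariety of positive dimension, contained in both $Z$ and $W$. Next I would lift $V$ through $f$: since $f\colon A\to B$ is an isogeny, and in particular has finite fibers, the preimage $f^{-1}(V)\subseteq A$ is a closed subgroup scheme of dimension $\dim V$. Letting $V'$ be its neutral component yields an abelian subvariety of $A$ with $\dim V'=\dim V>0$.

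The key step is to show that $V'\subseteq Z'\cap W'$. From $V\subseteq Z$ one obtains $V'\subseteq f^{-1}(V)\subseteq f^{-1}(Z)$, and since $V'$ is connected and contains the identity, it must lie in the neutral component $Z'$ of $f^{-1}(Z)$. The symmetric argument with $W$ in place of $Z$ gives $V'\subseteq W'$, so $V'\subseteq Z'\cap W'$, contradicting the assumption that $Z'\cap W'$ is finite. The proof is short and I do not foresee any genuine obstacle; the one subtle point, namely that $V'$ lands inside $Z'$ and not in a different component of $f^{-1}(Z)$, follows from the general fact that any connected subscheme of a group scheme containing the identity lies in the neutral component.
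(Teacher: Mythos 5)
Your proof is correct, but it takes a genuinely different route from the paper's. You argue by contrapositive: if $Z \cap W$ is infinite, you take the neutral component $V \subseteq Z \cap W$ (a positive-dimensional abelian subvariety), observe that the neutral component $V'$ of $f^{-1}(V)$ still has $\dim V' = \dim V > 0$ because $f$ has finite fibers, and then use the fact that a connected subgroup scheme through the identity lies in the neutral component to conclude $V' \subseteq Z' \cap W'$, a contradiction. The paper instead argues directly: it observes that $Z' \to Z$ and $W' \to W$ are surjective, so the fibered product $Z' \times_B W'$ surjects onto $Z \cap W$; then the difference map $Z' \times_B W' \to \ker(f)$, $(z,w) \mapsto z - w$, has kernel $Z' \cap W'$, exhibiting $Z' \times_B W'$ as an extension of a subgroup of the finite $\ker(f)$ by the finite $Z' \cap W'$, hence finite. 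Your approach is slightly more geometric and relies on dimension-counting through an isogeny plus connectedness; the paper's is a purely group-theoretic exact-sequence argument that avoids the contrapositive and makes the source of finiteness (the two finite groups $Z' \cap W'$ and $\ker f$) explicit. Both are sound, and neither has a real advantage for this particular lemma.
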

\begin{proof}
    We have that $Z' \to Z$ and $W' \to W$ are surjective, so that $Z'\times_B W' \to Z \cap W$ is surjective.  Thus, it suffices to show that $Z' \times_B W'$ is finite.  The map $Z'\times_B W' \to \text{ker}(f)$, given by $(z,w) \mapsto z-w$, fits into an exact sequence
    $$0 \to Z'\cap W' \to Z'\times_B W' \to \text{ker}(f).$$
    Since $Z'\cap W'$ and $\text{ker}(f)$ are finite, so is $Z'\times_B W'$.
\end{proof}
\subsection{Symplectic AVs associated to Brauer classes}
Let $X$ be an abelian variety, and let $\alpha \in \text{Br}(X)$.  In \cite{LLT25}, they constructed a symplectic abelian variety $A_{(X,\alpha)}$ associated to the pair $(X,\alpha)$.  We begin by recalling the construction.  

Since $\text{Br}(X)$ is torsion, we may view $\alpha$ as an element of $\text{Br}(X)[n]$ for some $n \in \mathbb{Z}$.  The Kummer sequence induces an exact sequence
$$0\to \text{NS}(X)/n \to \text{Hom}(\wedge^2 X[n],\mu_n) \to \text{Br}(X)[n] \to 0.$$
It follows that we may associate with $\alpha$ an alternating pairing $e_{\alpha}:X[n] \times X[n] \to \mathbb{G}_m$ that is not necessarily unique.  Let $\phi_{\alpha}:X[n] \to \hat{X}[n]$ denote the skew-symmetric homomorphism corresponding to $e_{\alpha}$.  Let $K_\alpha$ denote the graph of $\phi_\alpha$, viewed as a subgroup of $X \times \hat{X}$, and let 
$$A_{(X,\alpha)}=(X \times \hat{X})/K_{\alpha}.$$
We give $A_{(X,\alpha)}$ the structure of a symplectic abelian variety as follows.  The biextension $\mathscr{L}_X:=p_{23}^*\mathscr{P} \otimes p_{14}^*\mathscr{P}^{-1}$ makes $X \times \hat{X}$ into a symplectic abelian variety.  By \cite[Section 1]{Pol96}, $\mathscr{L}_X^{\otimes n}$ descends to symplectic biextension $\mathscr{L}_{(X,\alpha)}$ of $A_{(X,\alpha)}$.  By \cite[Section 1]{Pol96}, the symplectic abelian variety $(A_{(X,\alpha)},\mathscr{L}_{(X,\alpha})$ is independent of our choice of $n$ and $e_\alpha$.  However, for the remainder of this section, we will fix our choice of $n$ and $e_\alpha$ used to construct $A_{(X,\alpha)}$.

Let $\pi:X \times \hat{X} \to A_{(X,\alpha)}$ denote the quotient by $K_\alpha$, and let $\psi_\alpha=\psi_{\mathscr{L}_\alpha}$.  As noted in \cite[Section 3.2]{LLT25}, $\psi_\alpha$ is the unique map satisfying $\psi_\alpha=n\hat{\pi}\psi_{\alpha}\pi$.  It follows that $\pi$ is a symplectic isogeny.  We will use this to study Lagrangians in $A_{(X,\alpha)}$.  In particular, we wish to prove the following.

\begin{prop}\label{4: p finiteness}
    Let $Z \subset A_{(X,\alpha)}$ be a Lagrangian.  Then there exists an embedding $\iota:\hat{X} \hookrightarrow A_{(X,\alpha)}$ such that $\iota(\hat{X})$ is a Lagrangian and $Z\cap \iota(\hat{X})$ is finite.
\end{prop}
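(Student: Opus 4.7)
The plan is to reduce to the case of a symplectic abelian variety of the form $Y \times \hat{Y}$, treated in \cite[Lemma 2.2.7(ii)]{Pol12}, by pulling back along the symplectic isogeny $\pi: X \times \hat{X} \to A_{(X,\alpha)}$ (whose kernel is $K_\alpha$) and pushing the result back down.

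First I would let $Z' \subset X \times \hat{X}$ denote the neutral component of $\pi^{-1}(Z)$. Since $\pi$ is a symplectic isogeny, \Cref{4: l lagrangians} gives that $Z'$ is Lagrangian. Then I would invoke \cite[Lemma 2.2.7(ii)]{Pol12} in $X \times \hat{X}$ to produce a Lagrangian embedding $j: \hat{X} \hookrightarrow X \times \hat{X}$ satisfying both $j(\hat{X}) \cap Z'$ finite and $j(\hat{X}) \cap K_\alpha = \{0\}$, and then set $\iota := \pi \circ j$.

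Granting the condition $j(\hat{X}) \cap K_\alpha = \{0\}$, which makes $\iota$ injective, the remaining properties of $\iota$ follow from the symplectic-isogeny lemmas. Since $K_\alpha$ is finite, the neutral component of $\pi^{-1}(\iota(\hat{X}))$ is $j(\hat{X})$, so by \Cref{4: l lagrangians} $\iota(\hat{X})$ is Lagrangian. Similarly, the neutral components of $\pi^{-1}(\iota(\hat{X}))$ and $\pi^{-1}(Z)$ are $j(\hat{X})$ and $Z'$, which meet in a finite set, so \Cref{4: l finiteness} gives that $\iota(\hat{X}) \cap Z$ is finite.

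The main obstacle is to arrange $j(\hat{X}) \cap K_\alpha = \{0\}$ alongside the finite intersection with $Z'$. I would handle this by exploiting the flexibility in Polishchuk's construction: his argument produces an infinite family of Lagrangian complements of $Z'$ isomorphic to $\hat{X}$, obtained from any initial complement by modification by symmetric homomorphisms arising from the symplectic splitting. For each nonzero $k \in K_\alpha$, the subfamily of complements containing $k$ is a proper closed condition on this family. Since $K_\alpha$ is finite, a suitably chosen member simultaneously avoids every nonzero element of $K_\alpha$ and retains finite intersection with $Z'$, providing the desired $j$.
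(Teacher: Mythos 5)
Your overall strategy matches the paper's exactly: pass to $Z' \subset X \times \hat X$ via $\pi$, apply Polishchuk's lemma there, and push the resulting Lagrangian back down via $\iota = \pi \circ j$, using \Cref{4: l lagrangians} and \Cref{4: l finiteness} to transfer the Lagrangian and finiteness properties. You also correctly isolate the one point that needs care: injectivity of $\iota$, i.e.\ arranging $j(\hat X)\cap K_\alpha = \{0\}$.

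The gap is in how you propose to achieve this. Your appeal to ``a proper closed condition on the family'' of Lagrangian complements is not a well-formed argument here. In the version of Polishchuk's lemma actually cited (\Cref{4: l pol}), the family is $\Gamma(m\phi_L)$ indexed by the discrete parameter $m\in\mathbb Z$, so there is no topology in which ``closed'' buys you anything, and a counting argument would require knowing that for each fixed $0\neq k\in K_\alpha$ only finitely many $m$ have $k\in\Gamma(m\phi_L)$. That last claim is in fact \emph{false} in general: writing $k=(x,\phi_\alpha(x))$ with $x\in X[n]$, the condition $k\in\Gamma(m\phi_L)$ reads $x=m\,\phi_L(\phi_\alpha(x))$, and since $\phi_L(\phi_\alpha(x))$ is $n$-torsion this depends only on $m\bmod n$ --- so the bad $m$'s, when nonempty, form an entire arithmetic progression. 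Genericity in $m$ therefore does not get you off the ground.

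The paper's fix is concrete and sidesteps all of this: since \Cref{4: l pol} holds for almost all $m$, one may choose $m$ with $n\mid m$. Then for $k=(x,\phi_\alpha(x))\in K_\alpha$, $m\,\phi_L(\phi_\alpha(x)) = (m/n)\,\phi_L(n\,\phi_\alpha(x)) = 0$, forcing $x=0$; so $\Gamma(m\phi_L)\cap K_\alpha=\{0\}$ automatically and $\iota=\pi\circ m\phi_L$ is injective. Once you make that substitution, the rest of your argument (Lagrangian-ness and finiteness of $\iota(\hat X)\cap Z$ via the two lemmas) goes through and agrees with the paper.
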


We will derive this as a corollary of the following lemma of Polishchuk's.

\begin{lem}[{\cite[Lemma 2.2.7(ii)]{Pol96}}]\label{4: l pol} Let $Z \subset X \times \hat{X}$ be a Lagrangian, and let $L$ be an ample line bundle on $\hat{A}$.  Then for almost all $m \in \mathbb{Z}$, the Lagrangian $\Gamma(m\phi_L)$ has finite intersection with $Z$, where $\Gamma(m\phi_L)$ is the graph of $m\phi_L:\hat{A} \to A$.   
\end{lem}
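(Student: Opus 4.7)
The plan is to recast the intersection $Z \cap \Gamma(m\phi_L)$ as the kernel of a single homomorphism from $Z$ to $X$, and then detect when this kernel is finite via a tangent-space calculation. Define
$$h_m : Z \to X, \qquad (x,y) \mapsto x - m\phi_L(y);$$
a point $(x,y) \in Z$ lies on $\Gamma(m\phi_L)$ if and only if $x = m\phi_L(y)$, so $Z \cap \Gamma(m\phi_L) = \ker(h_m)$. Since $\dim Z = g = \dim X$, this kernel is finite if and only if $h_m$ is an isogeny onto $X$, if and only if the differential $dh_m : T_0 Z \to T_0 X$ is surjective.

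To analyze $dh_m$, set $Z_1 = (Z \cap (X \times 0))^0 \subset X$ and $Z_2 = p_2(Z) \subset \hat{X}$. The Lagrangian condition on $Z$ with respect to the standard biextension $p_{23}^*\mathscr{P} \otimes p_{14}^*\mathscr{P}^{-1}$ forces $Z_1 = Z_2^{\perp}$, i.e.\ $Z_1$ is the kernel of the restriction map $X \to \widehat{Z_2}$ dual to $Z_2 \hookrightarrow \hat{X}$. Choose a linear section $s : T_0 Z_2 \to T_0 Z$ of $p_2$ and write $s(v) = (\tau(v), v)$ for some linear $\tau$; then $T_0 Z = T_0 Z_1 \oplus s(T_0 Z_2)$, and
$$dh_m(u + \tau(v),\, v) \;=\; u + (\tau - m\, d\phi_L)(v)$$
for $u \in T_0 Z_1$, $v \in T_0 Z_2$. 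Since $T_0 Z_1 \subseteq \text{im}(dh_m)$ automatically, surjectivity of $dh_m$ reduces to surjectivity of the induced map
$$T_0 Z_2 \xrightarrow{\tau - m\, d\phi_L} T_0 X \twoheadrightarrow T_0 X / T_0 Z_1,$$
which, since both sides have dimension $g - \dim Z_1$, is equivalent to being an isomorphism.

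The crucial input is that $d\phi_L : T_0 Z_2 \to T_0 X / T_0 Z_1$ is itself already an isomorphism. This comes from ampleness: the composition $Z_2 \hookrightarrow \hat{X} \xrightarrow{\phi_L} X \twoheadrightarrow \widehat{Z_2}$ is the polarization attached to the ample restriction $L|_{Z_2}$, hence an isogeny, and its kernel inside $X$ is precisely $Z_2^{\perp} = Z_1$. It follows that $\phi_L(Z_2) \cap Z_1$ is finite, so $d\phi_L(T_0 Z_2) \cap T_0 Z_1 = 0$, and the dimension count upgrades injectivity to an isomorphism. Granting this, the endomorphism $(d\phi_L)^{-1}\tau$ of $T_0 Z_2$ has only finitely many complex eigenvalues, and $\tau - m\, d\phi_L$ fails to be an isomorphism modulo $T_0 Z_1$ exactly when $m$ is one of them; hence all but finitely many $m \in \mathbb{Z}$ yield finite intersection.

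The step I expect to be the main obstacle is pinning down the identification $Z_1 = Z_2^{\perp}$ and the interpretation of $Z_2 \to X \to \widehat{Z_2}$ as the polarization of $L|_{Z_2}$; both require careful bookkeeping of the standard symplectic biextension on $(X \times \hat{X})^2$ and its behavior under restriction to abelian subvarieties. Once these dualities are in place, the remainder is formal linear algebra and the spectral observation above.
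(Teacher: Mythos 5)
The paper does not actually prove this lemma---it is quoted verbatim from Polishchuk (modulo the typos $\hat A$, $A$ for $\hat X$, $X$, which you silently and correctly fixed)---so there is no in-text argument to compare against, but your proof is correct and is essentially the standard one. The reduction of finiteness to surjectivity of $dh_m$, the identification $T_0Z_1=\ker\bigl(T_0X\to T_0\widehat{Z_2}\bigr)$ forced by the Lagrangian condition together with the dimension count $\dim Z_1=g-\dim Z_2$, the fact that the composite $Z_2\hookrightarrow\hat X\xrightarrow{\phi_L}X\twoheadrightarrow\widehat{Z_2}$ equals $\phi_{L|_{Z_2}}$ and is therefore an isogeny by ampleness, and the concluding observation that $\overline{\tau}-m\,\overline{d\phi_L}$ degenerates only when $m$ lies in the finite spectrum of $\overline{d\phi_L}^{-1}\overline{\tau}$ all check out.
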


\begin{proof}[Proof of \Cref{4: p finiteness}]
    Let $Z'$ denote the neutral component of $\pi^{-1}(Z)$.  By \Cref{4: l lagrangians}, $Z'$ Lagrangian.  Applying \Cref{4: l pol} to $Z'$, we can find an ample line bundle $L$ on $\hat{X}$ and an integer $m$ divisible by $n$ such that $Z'\cap \Gamma(m\phi_L)$ is finite.
    
    Let $\iota=\pi \circ m\phi_L:\hat{X} \to A_{(X,\alpha)}$.  The condition that $n \mid m$ implies that $\iota$ is injective.  Since $\Gamma(m\phi_L)$ is Lagrangian, \Cref{4: l lagrangians} implies that $\iota(\hat{X})$ is too.  Since $Z' \cap \Gamma(m\phi_L)$ is finite, we can apply \Cref{4: l finiteness} to conclude that $\iota(\hat{X}) \cap Z$ is finite.
\end{proof}
\subsection{Twisted Derived Equivalence}
Before giving the proof of \Cref{4: t main}, we need to recall \cite[Theorem 1.1]{LLT25}.

\begin{thm}[{\cite[Theorem 1.1]{LLT25}}]\label{4: t LLT} Let $X$ and $Y$ be abelian varieties, let $\alpha \in \text{Br}(X)$, and let $\beta \in \text{Br}(Y)$.  If $\Db(X,\alpha) \simeq \Db(Y,\beta)$, then there exists a symplectic isomorphism $A_{(X,\alpha)} \simeq A_{(Y,\beta)}$.
\end{thm}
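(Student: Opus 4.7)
The plan is to reduce \Cref{4: t LLT} to \Cref{t: Olsson} by constructing a canonical identification $A_{(X,\alpha)} \simeq \text{Aut}^0_\mathscr{X}$ as symplectic abelian varieties, after which the abstract isomorphism $\Psi:\text{Aut}^0_\mathscr{X} \xrightarrow{\sim} \text{Aut}^0_\mathscr{Y}$ of \Cref{t: Olsson} will automatically upgrade to the desired symplectic isomorphism. The leverage for this strategy is that $\Psi$ is defined by the derived-categorical relation $\Phi_\mathscr{E}\circ \sigma^* \simeq \Psi(\sigma)^* \circ \Phi_\mathscr{E}$, which behaves well under composition and with respect to the natural $\mathbb{G}_m$-action on kernels, so any structure on $\text{Aut}^0_\mathscr{X}$ that is intrinsic to the gerbe-composition is preserved.

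The first step is to match the underlying abelian varieties. A point $(x,L) \in X \times \hat{X}$ determines an element of $\text{Aut}^0_\mathscr{X}$ by combining a lift of the translation $t_x$ (which exists because, on an abelian variety, $t_x^*\alpha = \alpha$) with the automorphism $\eta_X(L)$. Different choices of lift differ by a line bundle, so one gets a well-defined morphism $X \times \hat{X} \to \text{Aut}^0_\mathscr{X}$ only after quotienting by some $\hat{X}$-ambiguity, which I would compute by tracing $\alpha \in \text{Br}(X)[n]$ through the Kummer sequence. The resulting kernel should equal $K_\alpha$ precisely because the skew-symmetric pairing $e_\alpha: X[n] \times X[n] \to \mu_n$ that defines $\phi_\alpha$ records the obstruction to choosing compatible lifts of $n$-torsion translations, giving the identification $A_{(X,\alpha)} \simeq \text{Aut}^0_\mathscr{X}$.

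Next I would describe the symplectic biextension $\mathscr{L}_{(X,\alpha)}$ intrinsically on $\text{Aut}^0_\mathscr{X}$. Since $\mathscr{L}_X = p_{23}^*\mathscr{P}\otimes p_{14}^*\mathscr{P}^{-1}$ is built out of the Poincaré pairing, its descent $\mathscr{L}_{(X,\alpha)}$ should correspond on $\text{Aut}^0_\mathscr{X}$ to a Poincaré-type commutator pairing for the extension $0 \to \hat{X} \to \text{Aut}^0_\mathscr{X} \to X \to 0$: it should measure how a lift of a translation by $x$ fails to commute with the automorphism $\eta_X(L)$. Once pulled back to $X \times \hat{X}$, this pairing should recover $\mathscr{L}_X^{\otimes n}$, matching the descent data used to define $\mathscr{L}_{(X,\alpha)}$.

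The main obstacle is showing that $\Psi$ preserves this biextension. A key preliminary is to check that $\Psi$ maps the subgroup $\hat{X} \subset \text{Aut}^0_\mathscr{X}$ isomorphically onto $\hat{Y} \subset \text{Aut}^0_\mathscr{Y}$: this is the statement that line-bundle-tensoring autoequivalences are carried to line-bundle-tensoring autoequivalences, and it can plausibly be extracted from \Cref{c: line bundles} together with the dimension count from \Cref{t}. Granting this, $\Psi$ becomes a morphism of extensions of (isogenous copies of) $X$ and $Y$ by their duals, so the Poincaré-type commutator pairing is automatically preserved. The remaining subtlety is to rigidify the identifications sufficiently so that the pairings agree on the nose, rather than just up to isogeny; this likely requires either a direct kernel calculation extending \Cref{l: kernels} to track the biextension line bundle, or a twisted Mukai-lattice argument à la Orlov, in which $A_{(X,\alpha)}$ is realized as a quotient of the numerical Mukai lattice and the derived equivalence induces an isometry of Mukai pairings.
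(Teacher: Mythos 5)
The paper does not prove this statement: it is quoted verbatim as \cite[Theorem 1.1]{LLT25}, and the introduction explicitly says the authors ``were not able to prove it ourselves.'' So there is no in-paper argument to compare against; you are attempting a theorem the paper treats as a black box. As a strategy your outline is in a reasonable spirit --- identifying $A_{(X,\alpha)}$ with $\text{Aut}^0_{\mathscr{X}}$ and transporting structure through $\Psi$ is a natural plan --- but as written it contains a concrete error and leaves the genuinely hard step as an unproved ``should.''

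The error: you propose as a ``key preliminary'' that $\Psi$ carries $\hat{X}\subset \text{Aut}^0_{\mathscr{X}}$ isomorphically onto $\hat{Y}\subset\text{Aut}^0_{\mathscr{Y}}$, i.e.\ that tensoring-by-$\text{Pic}^0$ autoequivalences go to autoequivalences of the same type. This is false in general, and its failure is the engine of the whole subject. Already for the untwisted Poincar\'e equivalence $\Db(X)\simeq\Db(\hat{X})$, tensoring by $L\in\hat{X}$ on the source corresponds to translation by $L$ on the target, so $\Psi$ interchanges the two factors of $X\times\hat{X}$ rather than preserving $\hat{X}$. More to the point, in the paper's own Theorem 3.10 the isogeny $f:\hat{X}\to Y$ is precisely $q_Y\circ\Psi\mid_{\hat{X}}$, which would vanish if your preliminary held. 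The correct statement is only that $\Psi(\hat{X})$ is \emph{some} Lagrangian of $A_{(Y,\beta)}$, generally distinct from $\hat{Y}$, and extracting \Cref{4: t main} from that weaker fact is exactly what the rest of Section 4 is for. Beyond this, your claim that the biextension is ``automatically preserved'' once $\Psi$ respects the extensions is unjustified: a symplectic biextension of $A^2$ is extra data not determined by the group structure (the symplectic automorphisms of $X\times\hat{X}$ form a proper subgroup of all automorphisms), so you would still need to show either that $\mathscr{L}_{(X,\alpha)}$ is intrinsically the commutator pairing of the gerbe $\mathscr{A}ut_{\mathscr{X}}\to\text{Aut}^0_{\mathscr{X}}$ \emph{and} that $\Phi_{\mathscr{E}}$ induces an equivalence of these gerbes (not merely of their coarse moduli, which is all \Cref{t: Olsson} asserts), or carry out a twisted Mukai-lattice isometry argument in full. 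Neither is supplied, and the preliminary identification $A_{(X,\alpha)}\simeq\text{Aut}^0_{\mathscr{X}}$, including the matching of $K_\alpha$ with the lifting obstruction, is itself only sketched.
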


\begin{proof}[Proof of \Cref{4: t main}]
Let $\Db(X,\alpha) \simeq \Db(Y,\beta)$ be a twisted derived equivalence, and let $f:A_{(X,\alpha)} \xrightarrow{\sim} A_{(Y,\beta)}$ the symplectic isomorphism obtained from \Cref{4: t LLT}.  The result follows from applying \Cref{4: p finiteness} and \Cref{4 l: finite intersections} to the Lagrangian $f(\hat{X}) \subset A_{(Y,\beta)}$.
    
\end{proof}
\printbibliography

@article {BS21,
    AUTHOR = {Bergh, Daniel and Schn\"urer, Olaf M.},
     TITLE = {Decompositions of derived categories of gerbes and of families
              of {B}rauer-{S}everi varieties},
   JOURNAL = {Doc. Math.},
  FJOURNAL = {Documenta Mathematica},
    VOLUME = {26},
      YEAR = {2021},
     PAGES = {1465--1500},
      ISSN = {1431-0635,1431-0643},
   MRCLASS = {14F08 (14A20)},
  MRNUMBER = {4334847},
MRREVIEWER = {Giulio\ Bresciani},
}

@article {Bri13,
    AUTHOR = {Brion, Michel},
     TITLE = {Homogeneous projective bundles over abelian varieties},
   JOURNAL = {Algebra Number Theory},
  FJOURNAL = {Algebra \& Number Theory},
    VOLUME = {7},
      YEAR = {2013},
    NUMBER = {10},
     PAGES = {2475--2510},
      ISSN = {1937-0652,1944-7833},
   MRCLASS = {14J60 (14F22 14K05 14L30)},
  MRNUMBER = {3194649},
MRREVIEWER = {Sergey\ V.\ Tikhonov},
       DOI = {10.2140/ant.2013.7.2475},
       URL = {https://doi.org/10.2140/ant.2013.7.2475},
}

@phdthesis{C00,
    author = {Căldăraru, Andrei},
    title = {Derived Categories of Twisted Sheaves on Calabi-Yau Manifolds},
    school = {Cornell University},
    year = {2000}
}

@article {CS07,
    AUTHOR = {Canonaco, Alberto and Stellari, Paolo},
     TITLE = {Twisted {F}ourier-{M}ukai functors},
   JOURNAL = {Adv. Math.},
  FJOURNAL = {Advances in Mathematics},
    VOLUME = {212},
      YEAR = {2007},
    NUMBER = {2},
     PAGES = {484--503},
      ISSN = {0001-8708,1090-2082},
   MRCLASS = {14F05 (18E10 18E30)},
  MRNUMBER = {2329310},
MRREVIEWER = {Ana\ Cristina\ L\'opez-Mart\'in},
       DOI = {10.1016/j.aim.2006.10.010},
       URL = {https://doi.org/10.1016/j.aim.2006.10.010},
}

@misc{dJO22,
      title={Point Objects on Abelian Varieties}, 
      author={Aise Johan de Jong and Martin Olsson},
      year={2022},
      eprint={2209.05553},
      archivePrefix={arXiv},
      primaryClass={math.AG},
      url={https://arxiv.org/abs/2209.05553}, 
}

@unpublished{EvdGM,
    author ={Bas Edixhoven and Gerard van der Geer and Ben Moonen} ,
    title = {Abelian Varieties},
    note = {preliminary version of the first chapters},
    url = {http://van-der-geer.nl/~gerard/AV.pdf},
}

@article {Hon18,
    AUTHOR = {Honigs, Katrina},
     TITLE = {Derived equivalent surfaces and abelian varieties, and their
              zeta functions},
   JOURNAL = {Proc. Amer. Math. Soc.},
  FJOURNAL = {Proceedings of the American Mathematical Society},
    VOLUME = {143},
      YEAR = {2015},
    NUMBER = {10},
     PAGES = {4161--4166},
      ISSN = {0002-9939,1088-6826},
   MRCLASS = {14F05 (14K02)},
  MRNUMBER = {3373916},
MRREVIEWER = {Nicolae\ Manolache},
       DOI = {10.1090/proc/12522},
       URL = {https://doi.org/10.1090/proc/12522},
}

@misc{Lan24,
      title={Semi-Homogeneous Sheaves and Twisted Derived Categories}, 
      author={Tyler Lane},
      year={2024},
      eprint={2411.10274},
      archivePrefix={arXiv},
      primaryClass={math.AG},
      url={https://arxiv.org/abs/2411.10274}, 
}

@misc{Li25,
      title={Point Objects and Derived Equivalences of Twisted Derived Categories of Abelian Varieties}, 
      author={Ruoxi Li},
      year={2025},
      eprint={2511.19282},
      archivePrefix={arXiv},
      primaryClass={math.AG},
      url={https://arxiv.org/abs/2511.19282}, 
}

@misc{LLT25,
      title={Derived isogenies between abelian varieties}, 
      author={Zhiyuan Li and Ziwei Lu and Zhichao Tang},
      year={2025},
      eprint={2510.22612},
      archivePrefix={arXiv},
      primaryClass={math.AG},
      url={https://arxiv.org/abs/2510.22612}, 
}

@article {Mu78,
    AUTHOR = {Mukai, Shigeru},
     TITLE = {Semi-homogeneous vector bundles on an {A}belian variety},
   JOURNAL = {J. Math. Kyoto Univ.},
  FJOURNAL = {Journal of Mathematics of Kyoto University},
    VOLUME = {18},
      YEAR = {1978},
    NUMBER = {2},
     PAGES = {239--272},
      ISSN = {0023-608X},
   MRCLASS = {14F05},
  MRNUMBER = {498572},
MRREVIEWER = {P.\ E.\ Newstead},
       DOI = {10.1215/kjm/1250522574},
       URL = {https://doi.org/10.1215/kjm/1250522574},
}

@book {Ols16,
    AUTHOR = {Olsson, Martin},
     TITLE = {Algebraic spaces and stacks},
    SERIES = {American Mathematical Society Colloquium Publications},
    VOLUME = {62},
 PUBLISHER = {American Mathematical Society, Providence, RI},
      YEAR = {2016},
     PAGES = {xi+298},
      ISBN = {978-1-4704-2798-6},
   MRCLASS = {14D23 (14D22)},
  MRNUMBER = {3495343},
MRREVIEWER = {Stefan\ Schr\"oer},
       DOI = {10.1090/coll/062},
       URL = {https://doi.org/10.1090/coll/062},
}

@misc{Ols25,
      title={Twisted derived categories and Rouquier functors}, 
      author={Martin Olsson},
      year={2025},
      eprint={2501.05599},
      archivePrefix={arXiv},
      primaryClass={math.AG},
      url={https://arxiv.org/abs/2501.05599}, 
}

@article {Or02,
    AUTHOR = {Orlov, D. O.},
     TITLE = {Derived categories of coherent sheaves on abelian varieties
              and equivalences between them},
   JOURNAL = {Izv. Ross. Akad. Nauk Ser. Mat.},
  FJOURNAL = {Izvestiya Rossiiskoi Akademii Nauk. Seriya Matematicheskaya},
    VOLUME = {66},
      YEAR = {2002},
    NUMBER = {3},
     PAGES = {131--158},
      ISSN = {1607-0046,2587-5906},
   MRCLASS = {14F05 (14K10 18E30)},
  MRNUMBER = {1921811},
MRREVIEWER = {Alexander\ E.\ Polishchuk},
       DOI = {10.1070/IM2002v066n03ABEH000389},
       URL = {https://doi.org/10.1070/IM2002v066n03ABEH000389},
}

@article {Pol96,
    AUTHOR = {Polishchuk, A.},
     TITLE = {Symplectic biextensions and a generalization of the
              {F}ourier-{M}ukai transform},
   JOURNAL = {Math. Res. Lett.},
  FJOURNAL = {Mathematical Research Letters},
    VOLUME = {3},
      YEAR = {1996},
    NUMBER = {6},
     PAGES = {813--828},
      ISSN = {1073-2780},
   MRCLASS = {14K05 (13A20 14K25 18E30)},
  MRNUMBER = {1426538},
MRREVIEWER = {Daniel\ Hern\'andez Ruip\'erez},
       DOI = {10.4310/MRL.1996.v3.n6.a9},
       URL = {https://doi.org/10.4310/MRL.1996.v3.n6.a9},
}

@incollection {Pol12,
    AUTHOR = {Polishchuk, Alexander},
     TITLE = {Lagrangian-invariant sheaves and functors for abelian
              varieties},
 BOOKTITLE = {Derived categories in algebraic geometry},
    SERIES = {EMS Ser. Congr. Rep.},
     PAGES = {197--250},
 PUBLISHER = {Eur. Math. Soc., Z\"urich},
      YEAR = {2012},
      ISBN = {978-3-03719-115-6},
   MRCLASS = {14F05 (14K02)},
  MRNUMBER = {3050705},
MRREVIEWER = {Nicolae\ Manolache},
}

@article {PS11,
    AUTHOR = {Popa, Mihnea and Schnell, Christian},
     TITLE = {Derived invariance of the number of holomorphic 1-forms and
              vector fields},
   JOURNAL = {Ann. Sci. \'Ec. Norm. Sup\'er. (4)},
  FJOURNAL = {Annales Scientifiques de l'\'Ecole Normale Sup\'erieure.
              Quatri\`eme S\'erie},
    VOLUME = {44},
      YEAR = {2011},
    NUMBER = {3},
     PAGES = {527--536},
      ISSN = {0012-9593,1873-2151},
   MRCLASS = {14F05 (14C22)},
  MRNUMBER = {2839458},
MRREVIEWER = {Adrian\ Langer},
       DOI = {10.24033/asens.2149},
       URL = {https://doi.org/10.24033/asens.2149},
}

@article {Rou11,
    AUTHOR = {Rouquier, Rapha\"el},
     TITLE = {Automorphismes, graduations et cat\'egories triangul\'ees},
   JOURNAL = {J. Inst. Math. Jussieu},
  FJOURNAL = {Journal of the Institute of Mathematics of Jussieu. JIMJ.
              Journal de l'Institut de Math\'ematiques de Jussieu},
    VOLUME = {10},
      YEAR = {2011},
    NUMBER = {3},
     PAGES = {713--751},
      ISSN = {1474-7480,1475-3030},
   MRCLASS = {16E35 (18E30)},
  MRNUMBER = {2806466},
MRREVIEWER = {Alexander\ Zimmermann},
       DOI = {10.1017/S1474748011000089},
       URL = {https://doi.org/10.1017/S1474748011000089},
}

@misc{Ros09,
      title={Some remarks on the group of derived autoequivalences}, 
      author={Fabrice Rosay},
      year={2009},
      eprint={0907.3880},
      archivePrefix={arXiv},
      primaryClass={math.AG},
      url={https://arxiv.org/abs/0907.3880}, 
}
\end{document}